\def\update{December 24, 2009}

\documentclass[twoside,8pt,b5paper]{article}

\usepackage{graphicx,amsmath,latexsym,amssymb,amsthm,fancyhdr}



\hoffset=0.0in  
\voffset=-0.25in

\setlength\oddsidemargin{0.5cm} 
\setlength\evensidemargin{-0.5cm} 
\setlength\topmargin{0cm}         
\setlength\headheight{0cm}        
\setlength\headsep{25pt}          
\setlength\topskip{0.0cm}

\setlength\textheight{19cm}    
\setlength\textwidth{12.5cm}    

\setlength\marginparsep{0cm} 
\setlength\marginparwidth{0cm} 
\setlength\footskip{1.0cm}     



\newcounter{Theorem}

\theoremstyle{plain}
\newtheorem{thm}[Theorem]{\bf Theorem}
\newtheorem{prop}[Theorem]{\bf Proposition}
\newtheorem{cor}[Theorem]{\bf Corollary}

\theoremstyle{remark}
\newtheorem{remark}[Theorem]{\bf Remark}

\theoremstyle{definition}

\numberwithin{Theorem}{section}



\newcommand{\R}{\ensuremath{\mathbb{R}}}
\newcommand{\Z}{\ensuremath{\mathbb{Z}}}
\newcommand{\Q}{\ensuremath{\mathbb{Q}}}
\newcommand{\N}{\ensuremath{\mathbb{N}}}


\mathsurround=1pt

\pagestyle{fancy}
\fancyhead{} 
\fancyhead[RE]{\footnotesize{\thepage \hfill \textit{Chamchuri J.~Math. \textbf{1}(2009), no.~1: Amarisa Chantanasiri}}} 
\fancyhead[LO]{\footnotesize{\textit{On the criteria for linear independence of Nesterenko, Fischler and Zudilin} \hfill \thepage}}
\fancyfoot{} 

\begin{document}

\thispagestyle{empty}

\setcounter{page}{1}

\vspace*{-1.5cm}

\noindent {\footnotesize{\textbf{Chamchuri Journal of Mathematics}}} \\
\noindent {\footnotesize{\textsc{Volume 1(2009) Number 1, \pageref{1st-page}--\pageref{last-page}}}} \\
\noindent {\footnotesize{\texttt{http://www.math.sc.chula.ac.th/cjm}}}
\hfill {\footnotesize{\texttt{updated: \update}}}
\\

%

\label{1st-page}

\renewcommand{\thefootnote}{\fnsymbol{footnote}}

\begin{center}
{\LARGE{\textbf{On the criteria for linear independence}}} \\[0.2cm]
{\LARGE{\textbf{of Nesterenko, Fischler and Zudilin}}} \\[0.4cm]
{\Large{Chantanasiri Amarisa}} 
 \\[0.2cm]
\end{center}

\vspace{0.25cm}


\renewcommand{\thefootnote}{\arabic{footnote}}

\bigskip

\noindent
\textbf{Abstract:\ } 
In 1985, Yu.~V.~Nesterenko produced a criterion for linear independence, which is a variant of Siegel's. While Siegel
uses upper bounds on full systems of forms, Nesterenko uses upper and lower bounds on sufficiently dense sequences of individual forms. The proof of Nesterenko's criterion was simplified by F.~Amoroso and P.~Colmez in 2003. More recently, S.~Fischler and W.~Zudilin produced a refinement, together with a much simpler proof. This new proof rests on a simple argument which we expand here. We get a new result, which contains Nesterenko's criterion, as well as criteria for algebraic independence.

\vspace{0.6cm}

\noindent
\textbf{Keywords:\ } 
Linear independence criterion, Siegel's method, Nesterenko's criterion, criteria for transcendence, criteria for algebraic independence. 
\medskip

\noindent
\textbf{2000 Mathematics Subject Classification:\ } 
11J72, 11J85
\bigskip

\section{Introduction}

In his fundamental paper  \cite{Siegel1929} in 1929, C.L.~Siegel introduced the following result.

\begin{thm}[Siegel's linear independence criterion]\label{Thm:Siegel}
Let $\underline{\vartheta} = (\vartheta_1, \ldots, \vartheta_m) \in \R^{m}$. Assume that, for all  $\varepsilon > 0$, there exists a complete system of $m+1$  linearly independent linear  forms in $m+1$ variables 
\[
L_i = L_i(\underline{X}) = \displaystyle\sum_{j=0}^{m} b_{ij} X_j, \quad i=0,1,\ldots,m, \quad b_{ij} \in \Z,
\]
such that
 $$
 \max_{0 \le i \le m} |L_i(1, \underline{\vartheta})| \le \frac{\varepsilon}{A^{m-1}}
 \quad\hbox{ where }\quad A = \max_{0\le i,j\le m} |b_{ij}|.
 $$
Then the numbers $1, \vartheta_1, \ldots, \vartheta_m$ are linearly independent over $\Q$.
\end{thm}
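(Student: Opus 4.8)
The plan is to argue by contradiction, following Siegel's original determinant argument. Suppose $1,\vartheta_1,\dots,\vartheta_m$ are linearly dependent over $\Q$: there exist $a_0,a_1,\dots,a_m\in\Z$, not all zero, with $a_0+a_1\vartheta_1+\cdots+a_m\vartheta_m=0$. Introduce the linear form $\ell(\underline X)=\sum_{j=0}^m a_jX_j$, so that $\ell(1,\underline\vartheta)=0$, and set $H=\max_{0\le j\le m}|a_j|\ge1$; note that $H$ is fixed, independent of the auxiliary parameter $\varepsilon$. The strategy is to choose $\varepsilon$ so small that a suitable nonzero integer determinant formed from the forms $L_i$ and $\ell$ is forced to have absolute value strictly less than $1$.

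First I would fix a small $\varepsilon>0$ (to be pinned down at the end), take the associated system $L_0,\dots,L_m$ with $A=\max|b_{ij}|$, and write $\ell=\sum_{i=0}^m c_iL_i$, which is possible since the $L_i$ form a basis of the space of linear forms in $X_0,\dots,X_m$. As $\ell\ne0$, some $c_{i_0}\ne0$, and then $L_0,\dots,L_{i_0-1},\ell,L_{i_0+1},\dots,L_m$ is again a basis. Let $M$ be the $(m+1)\times(m+1)$ integer matrix whose rows are the coefficient vectors of these $m+1$ forms. Then $\Delta:=\det M$ is a nonzero integer, so $|\Delta|\ge1$.

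Next I would evaluate. With $v=(1,\vartheta_1,\dots,\vartheta_m)^{\top}$ and $y_i=L_i(1,\underline\vartheta)$ (so $|y_i|\le\varepsilon/A^{m-1}$), we have $Mv=w$, where $w$ has entry $y_i$ in each row coming from an $L_i$ and entry $0$ in the row $i_0$ coming from $\ell$. Because the first coordinate of $v$ equals $1$, Cramer's rule yields $\Delta=\det M_0$, where $M_0$ is $M$ with its $0$-th column replaced by $w$. Expanding $\det M_0$ along that column, the term from row $i_0$ drops out since the corresponding entry of $w$ is $0$; hence $\Delta$ is a sum of $m$ terms, each $\pm y_i$ times an $m\times m$ minor of $M$ whose rows are $m-1$ of the vectors $(b_{i'1},\dots,b_{i'm})$ together with $(a_1,\dots,a_m)$. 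Hadamard's inequality bounds each such minor by $(\sqrt m\,A)^{m-1}\cdot\sqrt m\,H$, so $|\Delta|\le m\cdot(\sqrt m\,A)^{m-1}(\sqrt m\,H)\cdot\varepsilon/A^{m-1}=m^{(m+2)/2}\,H\,\varepsilon$. Choosing $\varepsilon<m^{-(m+2)/2}/H$ from the start forces $|\Delta|<1$, contradicting $|\Delta|\ge1$; this contradiction proves the theorem.

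The crux — and the reason the hypothesis carries the exponent $m-1$ and not $m$ — is this final estimate: the minors multiplying the tiny quantities $y_i$ are of size $m$, but precisely one of their rows comes from the fixed form $\ell$ and contributes only a constant (a power of $H$), while the remaining $m-1$ rows each contribute a factor $O(A)$, exactly cancelling the $A^{m-1}$ in the denominator. The routine points to be careful about are that $M$ is genuinely invertible, so that $\Delta$ is a nonzero integer, and that it is the $\ell$-row of $w$ that vanishes, so the possibly large coefficients $a_j$ are never multiplied by any power of $A$.
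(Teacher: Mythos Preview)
Your argument is correct and is essentially Siegel's determinant argument as reproduced in the paper: form the $(m+1)\times(m+1)$ integer matrix from $\ell$ together with $m$ of the $L_i$, note that its determinant is a nonzero integer, and expand along the first column (after replacing it by the values at $(1,\underline\vartheta)$) so that each surviving $m\times m$ minor carries only $A^{m-1}$ from the $L$-rows and a single factor $H$ from the $\ell$-row. The only cosmetic differences are your explicit contradiction framing $\ell(1,\underline\vartheta)=0$ (which kills one term the paper keeps), your use of Hadamard's inequality in place of the cruder $m!$ bound, and your Cramer-rule phrasing of the column replacement.
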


This result is discussed in  \cite{MR1603608} Chap.~2 \S~1.4. We reproduce the proof here. As pointed out by Siegel, this argument  yields not only linear independence results, but also quantitative refinements (measures of linear independence) which we do not consider here. Studying only qualitative aspects of the subject enables us to simplify the situation. 

\begin{proof}
Let 
$$
L(\underline{X}) = a_0 X_0 + \cdots + a_m X_m, \quad a_j \in \Z,
$$
be  a non--zero linear form with integer coefficients in $m+1$ variables.
The aim is to prove, under the assumptions of Theorem \ref{Thm:Siegel},  $L(1,\underline{\vartheta}) \neq 0$. Set
$$
H=\max_{0 \le j \le m}  |a_j|.
$$
Let $\varepsilon$ be a positive real number $< 1/(m! \cdot mH)$.
Among the forms $\{L_0, \ldots, L_m\}$ satisfying the assumptions of Theorem \ref{Thm:Siegel} for this value of $\varepsilon$, there exist $m$ of them, say $L_{k_1}, \ldots, L_{k_m}$, which along with $L$ make up a complete system of linearly independent forms.
Denote by $\Delta$ the determinant of the coefficient matrix of the system of linear forms $L,L_{k_1}, \ldots, L_{k_m}$ and, for $0\le i,j\le m$,  by $\Delta_{i,j}$ the $(i,j)$-minor of this matrix.
Then 
\[
\Delta = L(1,\underline{\vartheta}) \cdot \Delta_{0,0} + \displaystyle\sum_{i=1}^{m} L_{k_i}(1,\underline{\vartheta}) \cdot \Delta_{i,0}.
\]
Since $\Delta \in \Z$ and $\Delta \neq 0$, we have $|\Delta| \ge 1$. One easily estimates, for $0\le j\le m$, 
$$
|\Delta_{0,j}|\le m! A^m \quad\hbox{ and }\quad
\max_{1\le i\le m} |\Delta_{i,j}|\le m! H A^{m-1}.
$$
It follows that
\begin{align*}
(m!)^{-1} &\le |L(1,\underline{\vartheta}))| \cdot A^m + \displaystyle\sum_{i=1}^{m} |L_{k_i}(1,\underline{\vartheta})| \cdot H A^{m-1}  \\
&\le |L(1,\underline{\vartheta})| \cdot A^m + \varepsilon \cdot mH   \\
&<  |L(1,\underline{\vartheta})| \cdot A^m + (m!)^{-1}.
\end{align*}
Thus $L(1,\underline{\vartheta}) \neq 0$.
\end{proof}

Siegel used this approach to prove transcendence results (linear independence of $1,\vartheta,\ldots,\vartheta^m,\ldots$). 

In 1985, Yu.~V.~Nesterenko  (see the corollary of the theorem in \cite{MR783238}) introduced a different type of criterion, involving a sequence of linear forms (and not a sequence of complete systems of linear forms); for each of them, he requires not only an upper bound, but also a lower bound.  

\begin{thm}[Nesterenko's linear independence criterion]   \label{Nest}
Let $c_1, c_2,\tau_1,\tau_2$ be positive real numbers and $\sigma(n)$ a non--decreasing positive  function such that  
\[
\lim_{n \rightarrow \infty} \sigma(n) = \infty
\quad\hbox{ and } \quad  
\overline{\lim_{n \rightarrow \infty}}  ~\frac{\sigma(n+1)}{\sigma(n)} = 1.
\] 
Let $\underline{\vartheta} = (\vartheta_1, \ldots, \vartheta_m) \in \R^{m}$. 
Assume that, for all sufficiently large integers $n$, there exists a linear form with integer coefficients in $m+1$ variables
\[
L_n(\underline{X})=\ell_{0n}X_0+\ell_{1n}X_1+\cdots+\ell_{m n}X_{m},
\]
which satisfies the conditions
\[
\sum_{i=0}^{m} |\ell_{in}| \le e^{\sigma(n)}
\quad\hbox{ and } \quad
c_1 e^{-\tau_1 \sigma(n)}\le |L_n(1, \underline{\vartheta})|\le c_2 e^{-\tau_2 \sigma(n)}.
\]
Then $\dim_{\Q} (\Q + \Q \vartheta_1 + \cdots + \Q \vartheta_m) \ge (1 + \tau_1) / (1+ \tau_1 - \tau_2)$.
\end{thm}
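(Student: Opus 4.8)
\bigskip

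The plan is to reduce everything to a statement about integer vectors, and then to run a ``first $d$ linearly independent forms'' argument whose engine is one determinant that is a nonzero integer.

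First I would discard the finitely many $n$ for which no $L_n$ is given, and set $d=\dim_{\Q}(\Q+\Q\vartheta_1+\cdots+\Q\vartheta_m)$, so that the claim is $d(1+\tau_1-\tau_2)\ge 1+\tau_1$. Choose a $\Q$-basis $\omega_1=1,\omega_2,\ldots,\omega_d$ of that space; writing each $\vartheta_i$ in it and clearing a common denominator shows that $L_n(1,\underline{\vartheta})=\tfrac1D\langle v_n,\underline{\omega}\rangle$ for a fixed integer $D\ge 1$ and suitable $v_n\in\Z^d$ with $\|v_n\|_\infty\ll e^{\sigma(n)}$, where $\underline{\omega}=(\omega_1,\ldots,\omega_d)$ has $\Q$-linearly independent coordinates; the two--sided bound becomes $c_1'e^{-\tau_1\sigma(n)}\le|\langle v_n,\underline{\omega}\rangle|\le c_2'e^{-\tau_2\sigma(n)}$, so in particular $v_n\neq 0$. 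The subspaces $\mathrm{span}_{\Q}\{v_n:n\ge N\}$ decrease with $N$, hence stabilise; if the limit is a proper subspace of $\Q^d$ one replaces $\Q^d$ by it and $d$ by its dimension, which only makes the desired inequality stronger, so I may assume $\{v_n:n\ge N\}$ spans $\Q^d$ for every $N$.

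Next, for a large $N$ I build greedily $N=p_1<p_2<\cdots<p_d$, where $p_j$ is the least index $>p_{j-1}$ with $v_{p_j}\notin\mathrm{span}_{\Q}(v_{p_1},\ldots,v_{p_{j-1}})$. Then $\Delta:=\det(v_{p_1},\ldots,v_{p_d})\in\Z\setminus\{0\}$, so $|\Delta|\ge 1$; and by minimality every $v_p$ with $p_{j-1}\le p<p_j$ lies in $\mathrm{span}(v_{p_1},\ldots,v_{p_{j-1}})$, so Cramer's rule gives $\Delta\, v_p=\sum_{i<j}\Delta^{(i)}_p v_{p_i}$ with $\Delta^{(i)}_p\in\Z$ equal to $\Delta$ with its $i$-th row replaced by $v_p$; pairing with $\underline{\omega}$ yields $\Delta\,\langle v_p,\underline{\omega}\rangle=\sum_{i<j}\Delta^{(i)}_p\langle v_{p_i},\underline{\omega}\rangle$. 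Every $d\times d$ determinant of these vectors I estimate by the (unimodular, hence value--preserving) change of variables making the first coordinate equal to $\langle\cdot,\underline{\omega}\rangle$ and expanding along that column: such a determinant is $\ll\sum_{k=1}^{d}|\langle w_k,\underline{\omega}\rangle|\prod_{l\neq k}\|w_l\|_\infty$, where $w_1,\ldots,w_d$ denote its rows (each some $v_p$). This is exactly how the smallness of the forms and the size bound enter together.

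Finally, apply the Cramer identity with $p=p_j-1$ for $j=2,\ldots,d$. On the left $|\langle v_{p_j-1},\underline{\omega}\rangle|\ge c_1'e^{-\tau_1\sigma(p_j-1)}\ge c_1'e^{-\tau_1\sigma(p_j)}$, and here the hypothesis $\overline{\lim}\,\sigma(n+1)/\sigma(n)=1$ (with $\sigma$ non--decreasing, hence $\sigma(n+1)/\sigma(n)\to 1$) also gives $\sigma(p_j-1)=(1+o(1))\sigma(p_j)$, so the scale of the intermediate index $p_j-1$ may be treated as the scale $\sigma(p_j)$ of the jump. On the right, insert the determinant estimate together with $|\langle v_{p_i},\underline{\omega}\rangle|\le c_2'e^{-\tau_2\sigma(p_i)}$; this produces a chain of inequalities among $\sigma(p_1)\le\cdots\le\sigma(p_d)$, and combining them --- adding up the resulting constraints and letting $N\to\infty$ so that the fixed constants drop out --- collapses to $\tau_2\le(1+\tau_1)(1-1/d)$, i.e.\ $d\ge(1+\tau_1)/(1+\tau_1-\tau_2)$. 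The two reductions and the linear--algebra identity are routine; the real work, and the main obstacle, is this last bookkeeping --- making the chain of determinant estimates close up to the \emph{sharp} constant rather than to something weaker (a single--scale use of $|\Delta|\ge1$ only yields roughly $d\ge 2\tau_2-\tau_1+1$) --- and it is precisely there that the density hypothesis on $\sigma$ is indispensable, since otherwise the scale could jump arbitrarily between $p_j-1$ and $p_j$ and the chain would not telescope.
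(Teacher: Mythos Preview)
Your route is genuinely different from the paper's. You follow the classical Nesterenko/Amoroso--Colmez line: pass to a $\Q$-basis of the span, greedily extract $d$ linearly independent vectors $v_{p_1},\ldots,v_{p_d}$ from the tail, and squeeze information out of the nonzero integer determinant $\Delta$ and the Cramer identities at the indices $p_j-1$. The paper instead deduces Nesterenko's criterion from its Theorem~2.1, whose engine is Minkowski's convex body theorem: for each $n$ one produces a nonzero lattice point $\underline{x}(n)$ in a box of volume $2^{r+1}$, picks the least $k\le n$ with $|x_0|\le 1/(2|L_k(\underline\xi)|)$, and then uses only the remark that if an integer equals $x+y$ with $|x|\le 1/2$ then $|y|\ge|x|$. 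That single inequality replaces your whole chain of determinant estimates and gives $A_n\le 2^{r+1}(B_nQ_n)^{r}$ directly; with $Q_n=e^{\sigma(n)}$, $A_n=e^{\tau_2'\sigma(n+1)}$, $B_n=e^{(\tau_1'-\tau_2')\sigma(n)}$ this is already $\tau_2\le r(1+\tau_1-\tau_2)$. A bonus of the paper's method is that the density hypothesis $\overline{\lim}\,\sigma(n+1)/\sigma(n)=1$ becomes unnecessary (it is absorbed into the shape of the upper bound in Corollary~2.3).

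That said, your proposal has a real gap precisely where you flag it. You write that the Cramer inequalities for $j=2,\ldots,d$, once combined, ``collapse to $\tau_2\le(1+\tau_1)(1-1/d)$'', but you do not perform this collapse, and it is exactly the nontrivial heart of the classical proof: one needs an inductive comparison of $\sigma(p_j)$ against $\sigma(p_1),\ldots,\sigma(p_{j-1})$, using the density condition at each step, and a careless estimate here (for instance bounding every $|\Delta_p^{(i)}|$ crudely by a product of norms) loses the sharp constant --- as you yourself note, a single-scale use of $|\Delta|\ge 1$ gives only roughly $d\ge 2\tau_2-\tau_1+1$. Two minor points: your ``unimodular'' change of variables is really a real shear (add $\sum_{j\ge 2}\omega_j$ times column $j$ to column $1$, using $\omega_1=1$), determinant $1$ but not integral; and in the tail-stabilisation step you should note that the coordinate bound $\|v_n\|_\infty\ll e^{\sigma(n)}$ survives passage to the limiting subspace via a fixed integral base change. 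The strategy is sound and can be completed along Amoroso--Colmez lines, but as written it is an outline with the key lemma asserted rather than proved; the paper's Minkowski argument sidesteps that lemma entirely.
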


The original proof by Nesterenko was rather involved; it has been simplified by F.~Amoroso \cite{Amoroso} and this simplification was revisited (and translated from Italian to French) by P.~Colmez \cite{Colmez}. Recently, S.~Fischler and W.~Zudilin \cite{FischlerZudilin} obtained a refinement of Nesterenko's Criterion. Their proof of this refinement is much easier than the previous proofs.
We develop this refinement in the second section, also we show that this result contains Nesterenko's Criterion. Then we deduce criteria for algebraic independence and transcendence criteria in the third and the fourth sections, respectively.

\section{Main theorem and its proof}

Here is our main result.

\begin{thm}\label{MainTh}
Let $\underline{\xi}=(\xi_i)_{i\ge 0}$ be a sequence of real numbers with $\xi_0=1$, 
$(r_n)_{n\ge 0}$ a non--decreasing sequence of positive integers, $(Q_n)_{n\ge 0}$, $(A_n)_{n\ge 0}$ and $(B_n)_{n\ge 0}$ sequences of positive real numbers such  that $\lim_{n\rightarrow\infty} A_n^{1/r_n}=\infty$ and, for all sufficiently large integers $n$,
\[
Q_n B_n \le Q_{n+1} B_{n+1}.
\]
For any integer $n\ge 0$, let 
$$
L_n(\underline{X})=\ell_{0n}X_0+\ell_{1n}X_1+\cdots+\ell_{r_n n}X_{r_n}
$$
be a linear form with integer coefficients in $r_n+1$ variables.
Assume that, for any sufficiently large integer $n$,
\[ 
\sum_{i=0}^{r_n} |\ell_{in}|\le Q_n,\quad
0<|L_n(\underline{\xi})|\le \frac{1}{A_n}
\quad\hbox{ and } \quad
\frac{|L_{n-1}(\underline{\xi})|}{|L_n(\underline{\xi})|}\le B_n.
\]
Then 
$$
A_n \le 2^{r_n+1}(B_n Q_n)^{r_n}
$$ 
for all sufficiently large integers $n$.
\end{thm}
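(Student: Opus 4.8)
The plan is to argue by contradiction: suppose that for infinitely many $n$ one has $A_n > 2^{r_n+1}(B_nQ_n)^{r_n}$, and derive a contradiction with $\lim_{n\to\infty}A_n^{1/r_n}=\infty$ by showing that along such $n$ the quantity $A_n$ is in fact bounded by something growing only like a fixed power of $B_nQ_n$ times a slowly varying factor. The mechanism mimics the pigeonhole/determinant argument of Siegel's proof, but with a single form per index: from the $r_n+1$ linear forms $L_n, L_{n+1}, \dots, L_{n+r_n}$ (or enough consecutive forms to span) one builds, via successive differences or a Vandermonde-type elimination, a nonzero integer, hence a quantity of absolute value $\ge 1$; estimating that integer from above using the hypotheses $\sum_i|\ell_{in}|\le Q_n$ and $|L_n(\underline\xi)|\le 1/A_n$ yields the bound.

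Concretely, the key steps I would carry out are: (1) Fix a large $n$ and consider the values $v_k := L_{n+k}(\underline\xi)$ for $k=0,1,\dots,r_n$. Using the telescoping hypothesis $|L_{k-1}(\underline\xi)|/|L_k(\underline\xi)|\le B_k$ together with the monotonicity $Q_nB_n\le Q_{n+1}B_{n+1}$, control the ratios of the $v_k$'s: each $|v_{n+k}|$ is at most $|v_n|\prod_{j=1}^k B_{n+j}^{\pm1}$, and one converts products of consecutive $B_j$'s and $Q_j$'s into $(B_NQ_N)^{r_n}$-type expressions by pushing all indices to the largest one in the window. (2) Form the $(r_n+1)\times(r_n+1)$ matrix whose rows are the coefficient vectors of $L_n,\dots,L_{n+r_n}$; if its determinant $\Delta$ is nonzero, expand $\Delta$ by Cramer/cofactors along the column encoding $\underline\xi$ to write an integer combination $\sum_k (\pm\Delta_k) v_{n+k}$ that equals $\Delta\cdot(\text{something})$, forcing $|\cdot|\ge 1$; bound each minor $|\Delta_k|$ by $(r_n)!\,(\max_i Q)^{r_n}\le 2^{r_n}\prod Q$ crudely, and each $|v_{n+k}|\le 1/A_{n}$ up to the $B$-factors from step (1). (3) Combine: $1\le \sum_k|\Delta_k||v_{n+k}|\le 2^{r_n+1}(B_nQ_n)^{r_n}/A_n$, i.e.\ $A_n\le 2^{r_n+1}(B_nQ_n)^{r_n}$, contradicting the assumed reverse inequality.

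The main obstacle is step (2): guaranteeing that among the forms in the window $L_n,\dots,L_{n+r_n}$ (or slightly more of them) there is a subcollection of $r_n+1$ whose coefficient matrix is nonsingular. This is where the Fischler--Zudilin idea enters — rather than demanding a full independent system as an axiom (as Siegel does), one extracts it from the sequence. If the coefficient vectors of all forms from some point on lay in a proper subspace, one gets a fixed nonzero integer relation $\sum c_i X_i$ with $L_n$ supported there; but then $|L_n(\underline\xi)|$ would be a fixed linear combination of the $\xi_i$, and either it is a nonzero constant (contradicting $|L_n(\underline\xi)|\le 1/A_n\to 0$) or it is forced to vanish (contradicting $|L_n(\underline\xi)|>0$) — so independence must recur infinitely often, and one works along that subsequence, using $\lim A_n^{1/r_n}=\infty$ to absorb the slowly varying ratio factors from step (1). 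A secondary technical point is bookkeeping the $B_j$ and $Q_j$ products across the window of length $r_n$; the monotonicity hypothesis $Q_nB_n\le Q_{n+1}B_{n+1}$ is exactly what makes $\prod_{j} B_{n+j}Q_{n+j}$ collapse cleanly, and one must be careful that the window width $r_n$ does not itself grow so fast as to spoil the estimate — which it cannot, since $r_n$ enters only through the harmless factor $2^{r_n+1}$ and the exponent $r_n$ already present on the right-hand side.
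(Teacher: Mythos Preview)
Your approach is a Siegel-style determinant argument on a window of consecutive forms, whereas the paper's proof does something quite different: it applies Minkowski's convex body theorem to the box
\[
|x_0|\le \frac{1}{2|L_n(\underline\xi)|},\qquad |x_0\xi_i-x_i|\le |2L_n(\underline\xi)|^{1/r_n}\quad(1\le i\le r_n)
\]
to produce a good simultaneous integer approximation $\underline{x}=\underline{x}(n)$ to $\underline\xi$; then, choosing the smallest $k$ with $|x_0|\le 1/(2|L_k(\underline\xi)|)$, it evaluates the single form $L_k$ at $\underline{x}$, splits $L_k(\underline{x})=x_0L_k(\underline\xi)+\sum_i\ell_{ik}(x_i-x_0\xi_i)$, and uses that the first summand has absolute value $\le 1/2$ to compare the two pieces. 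No system of independent forms is ever needed.

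Your route has genuine gaps. First, the independence extraction in your ``main obstacle'' paragraph does not work: if all coefficient vectors $(\ell_{0n},\dots,\ell_{r_n n})$ lie in a proper subspace, the orthogonal relation $\sum_i c_i\ell_{in}=0$ says nothing about the values $L_n(\underline\xi)=\sum_i \ell_{in}\xi_i$; it certainly does not force $L_n(\underline\xi)$ to be constant or zero, so no contradiction follows. Second, the forms $L_n,\dots,L_{n+r_n}$ involve $r_{n+k}+1$ variables with $r_{n+k}\ge r_n$ possibly strictly, so they do not even sit in a common $(r_n+1)$-dimensional space to form a square matrix. Third, your minor bound $(r_n)!\,Q^{r_n}$ is much larger than $2^{r_n}Q^{r_n}$, so the precise constant $2^{r_n+1}$ in the statement would not emerge. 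Finally, the hypothesis $|L_{k-1}|/|L_k|\le B_k$ gives only $|L_{k}|\ge |L_{k-1}|/B_k$, a \emph{lower} bound going forward; it does not let you bound $|L_{n+k}(\underline\xi)|$ above by $|L_n(\underline\xi)|$ times a product of $B$'s, so the passage in step~(3) from $\sum_k|\Delta_k||v_{n+k}|$ to something of size $(B_nQ_n)^{r_n}/A_n$ is unjustified.
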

\begin{proof}
For $n$ a sufficiently large integer, let $\mathsf{C}_n$ be the convex symmetric compact set in $\R^{1+r_n}$ defined by
\begin{equation}  \label{1}
|x_0|\le \frac{1}{2|L_n(\underline{\xi})|},\quad
|x_0\xi_i-x_i|\le |2 L_n(\underline{\xi})|^{1/r_n} \quad (1\le i\le r_n).
\end{equation}
The volume of $\mathsf{C}_n$ is $  2^{r_n+1}$. 
From Minkowski's Convex Body Theorem (Theorem 2B of Chapter II, \cite{MR568710}), there is a non-zero integer point in $\mathsf{C}_n$. We fix such a $\underline{x}(n)=(x_0(n),\ldots,x_{r_n}(n))\in\Z^{r_n+1}\setminus\{0\}$. Since $A_n^{1/r_n} \rightarrow\infty$ as $n \rightarrow\infty$, we have $x_0(n)\ne 0$ for $n$ sufficiently large. Indeed, if we had $x_0(n)=0$ with $A_n > 2$, we would have $|x_i(n)| \le (2/A_n)^{1/r_n} < 1$, and then $x_i(n)=0$, for each $i=1,...,r_n$, which contradicts the choice of $x(n)$. 
 
From the assumptions and the estimate  $|2L_n (\underline{\xi})|^{1/r_n} \le (2/ A_n)^{1/r_n}$ for $n$ sufficiently large, it follows that the sequence  $|2L_n (\underline{\xi})|^{1/r_n}$ tends to zero as $n \rightarrow\infty$.
Since $r_n \ge 1$, $|L_n (\underline{\xi})|$ also tends to zero.

Now if the sequence $|x_0(n)|$ did not tend to infinity, it admits a bounded subsequence, so a constant subsequence, and we would have $x_0(n)=y$, with $y \in \Z$, $y \ne 0$, for all $n$ belonging to an infinite subset $A$ of $\N$. For all integers $i \le \sup r_n$, we would have $\lim_{n \in A, n \to \infty} (y \xi_i-x_i(n)) = 0$, therefore $x_i(n) = y\xi_i$ for $n \in A$ sufficiently large. 
Thus $y\xi_i \in \Z$ for all $i$, hence $yL_n(\xi)\in \Z$ for all $n$. 
Since $L_n(\xi) \ne 0$, then we would have $|L_n(\xi)| \ge 1/|y|$ for all $n$, which contradicts the fact that $|L_n (\underline{\xi})|$ tends to zero.
Therefore  $\lim_{n \rightarrow \infty}|x_0(n)| =\infty $.

We fix $n$ sufficiently large and we write $\underline{x}$ instead of $\underline{x}(n)$. 
Let $k$ denote the least positive integer such that
$$
|x_0|\le \frac{1}{2|L_k(\underline{\xi})|}\cdotp
$$
Since $|x_0|$ tends to infinity with $n$, it follows that  $k$  also tends to infinity with $n$.
Moreover we have $ k\le n$ and
$$
|x_0|> \frac{1}{2|L_{k-1}(\underline{\xi})|}\cdotp
$$
Now we can write
\begin{equation}\label{Equation:1}
\sum_{i=0}^{r_k} \ell_{ik}x_i =x_0 \sum_{i=0}^{r_k} \ell_{ik} \xi_i+
\sum_{i=0}^{r_k}  \ell_{ik} (x_i-x_0\xi_i).
\end{equation}
The term on the left-hand side of (\ref{Equation:1}) is an integer. On the right-hand side, the first term has absolute value equal to $|x_0 L_k(\underline{\xi})|$. Therefore it is  bounded above by $1/2$. We now use the following remark
(compare with \cite{FischlerZudilin}): {\it if an integer can be written as a sum $x+y$ of two real numbers with $|x|\le 1/2$, then $|y|\ge |x|$.} Hence
\begin{equation}\label{Equation:2}
\left|
\sum_{i=0}^{r_k}  \ell_{ik} (x_i-x_0\xi_i)\right|\ge 
\left|
x_0 \sum_{i=0}^{r_k} \ell_{ik} \xi_i
\right|.
\end{equation}
The term on the left-hand side of (\ref{Equation:2}) is bounded above by
$$
\sum_{i=0}^{r_k} 
|\ell_{ik} | \max_{1\le i\le r_n}| x_i-x_0\xi_i|\le 
Q_k  (2| L_n(\underline{\xi})|)^{1/r_n}
\le
Q_k \left(\frac{2}{A_n} \right)^{1/r_n},
$$
while the term on the right-hand side of (\ref{Equation:2}) is bounded below by
$$
|x_0|\cdotp |L_k(\underline{\xi})|
\ge 
 \frac{1}{2 |L_{k-1}(\underline{\xi})|}
 \cdotp  |L_k(\underline{\xi})|
 \ge 
 \frac{1}{2B_k}\cdotp
$$
Thus
$$
A_n \le 2(2 B_k Q_k)^{r_n}.
$$
Since $B_k Q_k \le B_n Q_n$, this implies $A_n \le 2(2 B_n Q_n)^{r_n}$. 
\end{proof}
\begin{remark}
Theorem \ref{MainTh}  does not contain Theorem 6 of \cite{FischlerZudilin}, since the latter introduces refinements involving gcd's, but its proof relies on the same arguments.   
\end{remark}
We deduce from Theorem \ref{MainTh} a slight refinement of Theorem \ref{Nest} (Nesterenko's linear independence criterion).
\smallskip
\smallskip
\smallskip
\begin{cor}  \label{CorNest}
Let $\tau_1,\tau_2$ be positive real numbers and $\sigma(n)$  a non--decreasing positive  function such that  $\lim_{n \rightarrow \infty} \sigma(n) = \infty$.
Let $\underline{\vartheta} = (\vartheta_1, \ldots, \vartheta_m) \in \R^{m}$. 
Assume that, for all sufficiently large integers $n$, there exists a linear form with integer coefficients in $m+1$ variables
\[
L_n(\underline{X})=\ell_{0n}X_0+\ell_{1n}X_1+\cdots+\ell_{m n}X_{m}
\]
which satisfies the conditions
\[
\sum_{i=0}^{m} |\ell_{in}| \le e^{\sigma(n)}
\quad\hbox{ and }\quad
e^{-(\tau_1 + \mathit{o}(1))\sigma(n)}\le |L_n(1, \underline{\vartheta})|\le e^{-(\tau_2  + \mathit{o}(1))\sigma(n+1)}.
\]
Then $\dim_{\Q} (\Q + \Q \vartheta_1 + \cdots + \Q \vartheta_m) \ge (1 + \tau_1) / (1+ \tau_1 - \tau_2)$.
\end{cor}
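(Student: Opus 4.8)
\textbf{Proof plan for Corollary \ref{CorNest}.}

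The plan is to derive the corollary as a direct application of Theorem \ref{MainTh}, with the role of the key sequences played by appropriate powers of $e^{\sigma(n)}$. First I would reduce to the case where the numbers $1,\vartheta_1,\ldots,\vartheta_m$ satisfy a nontrivial linear relation: set $\underline{\xi}=(1,\vartheta_1,\ldots,\vartheta_m)$, write $d=\dim_{\Q}(\Q+\Q\vartheta_1+\cdots+\Q\vartheta_m)$, and observe that after relabelling we may assume the first $d$ of the $\xi_i$ form a $\Q$-basis, so that there is a fixed nonzero integer linear form vanishing at $\underline{\xi}$; restricting attention to the $d$ relevant coordinates, we may assume $r_n=d-1$ for all $n$. (If $d=m+1$ there is nothing to prove since $\tau_2<\tau_1$ forces the bound to be $\le 1\le m+1$; so assume $d\le m$.) The point of this reduction is that Theorem \ref{MainTh} will then give an upper bound on $A_n$ in terms of $(B_nQ_n)^{r_n}$ with $r_n=d-1$ fixed, and comparing exponential growth rates yields an inequality on $d$.

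Next I would set up the data for Theorem \ref{MainTh}. Take $Q_n=e^{\sigma(n)}$, so the hypothesis $\sum_i|\ell_{in}|\le Q_n$ holds. The upper bound $|L_n(\underline{\xi})|\le e^{-(\tau_2+o(1))\sigma(n+1)}$ suggests taking $A_n=e^{(\tau_2+o(1))\sigma(n+1)}$; since $\sigma(n)\to\infty$ and $r_n=d-1$ is bounded, $A_n^{1/r_n}=e^{(\tau_2+o(1))\sigma(n+1)/(d-1)}\to\infty$, as required. For $B_n$ I would use the two-sided bounds: from $|L_{n-1}(\underline{\xi})|\le e^{-(\tau_2+o(1))\sigma(n)}$ and $|L_n(\underline{\xi})|\ge e^{-(\tau_1+o(1))\sigma(n)}$ we get $|L_{n-1}(\underline{\xi})|/|L_n(\underline{\xi})|\le e^{(\tau_1-\tau_2+o(1))\sigma(n)}$, so set $B_n=e^{(\tau_1-\tau_2+o(1))\sigma(n)}$. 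It remains to check the monotonicity hypothesis $Q_nB_n\le Q_{n+1}B_{n+1}$ for large $n$: here $Q_nB_n=e^{(1+\tau_1-\tau_2+o(1))\sigma(n)}$, and since $\sigma$ is non-decreasing and $1+\tau_1-\tau_2>0$ (as $\tau_2<\tau_1$, which one may assume, else the claimed bound $\ge (1+\tau_1)/(1+\tau_1-\tau_2)$ is meaningless or $\le 1$) this is automatic up to absorbing the $o(1)$ — this is the one slightly delicate point, since the $o(1)$ terms could in principle spoil monotonicity; I would handle it by noting that $\sigma(n+1)\ge\sigma(n)$ and choosing the comparison carefully, or by the standard trick of replacing $\sigma$ by a slightly perturbed non-decreasing function with the same asymptotics.

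Then Theorem \ref{MainTh} yields $A_n\le 2^{d}(B_nQ_n)^{d-1}$ for all large $n$, i.e.
\begin{equation*}
e^{(\tau_2+o(1))\sigma(n+1)}\le 2^{d}e^{(1+\tau_1-\tau_2+o(1))(d-1)\sigma(n)}.
\end{equation*}
Taking logarithms, dividing by $\sigma(n)$, and using $\sigma(n+1)\ge\sigma(n)$ gives $\tau_2+o(1)\le (1+\tau_1-\tau_2)(d-1)+o(1)$, hence letting $n\to\infty$ we obtain $\tau_2\le (1+\tau_1-\tau_2)(d-1)$, which rearranges to $d\ge 1+\tau_2/(1+\tau_1-\tau_2)=(1+\tau_1)/(1+\tau_1-\tau_2)$. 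Since $d$ is an integer this is the desired conclusion. The main obstacle, as indicated, is the bookkeeping of the $o(1)$ terms: one must make sure the monotonicity condition $Q_nB_n\le Q_{n+1}B_{n+1}$ genuinely holds (not just asymptotically) and that the final limit argument is not affected; everything else is a routine substitution into Theorem \ref{MainTh}. Note also that Corollary \ref{CorNest} really is a refinement of Theorem \ref{Nest}: the constants $c_1,c_2$ are absorbed into the $o(1)$'s, the ratio hypothesis $\overline{\lim}\,\sigma(n+1)/\sigma(n)=1$ is no longer needed because the exponent $\sigma(n+1)$ in the upper bound for $|L_n(\underline\xi)|$ already compensates for it, and only $\lim\sigma(n)=\infty$ together with monotonicity is used.
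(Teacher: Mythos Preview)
Your plan is essentially the paper's proof: reduce to a $\Q$-basis of the span of $1,\vartheta_1,\ldots,\vartheta_m$ so that $r_n=r=d-1$ is fixed, feed the data into Theorem~\ref{MainTh}, and read off $\tau_2\le r(1+\tau_1-\tau_2)$. Two points deserve correction or tightening.

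First, the reduction is not a literal ``restriction to $d$ coordinates.'' The forms $L_n$ involve all $m+1$ variables; dropping coordinates changes their values. What the paper does (and what you need) is to write $d\vartheta_i=\sum_{j=0}^{r}c_{ij}\xi_j$ with integers $d,c_{ij}$ and define new integral forms $\Lambda_n(Y_0,\ldots,Y_r)=L_n\bigl(\sum_j c_{0j}Y_j,\ldots,\sum_j c_{mj}Y_j\bigr)$, so that $\Lambda_n(1,\underline{\xi})=d\cdot L_n(1,\underline{\vartheta})$. This costs a harmless constant factor in $Q_n$ (the paper writes $Q_n=ce^{\sigma(n)}$).

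Second, your parenthetical about $d=m+1$ is wrong: the quantity $(1+\tau_1)/(1+\tau_1-\tau_2)=1+\tau_2/(1+\tau_1-\tau_2)$ is always $>1$, so $\tau_2<\tau_1$ does not make it $\le 1$. There is no need to split this case off; the same argument with $r=m$ gives $\tau_2\le m(1+\tau_1-\tau_2)$, which is exactly the claimed bound.

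Finally, the paper resolves your ``one slightly delicate point'' cleanly: rather than carrying $o(1)$'s inside $A_n,B_n$, it fixes auxiliary $\tau_1'>\tau_1$ and $\tau_2'<\tau_2$, so that for large $n$ one has the clean bounds $|L_n|\ge e^{-\tau_1'\sigma(n)}$ and $|L_n|\le e^{-\tau_2'\sigma(n+1)}$. Then $Q_nB_n=ce^{(1+\tau_1'-\tau_2')\sigma(n)}$ is genuinely non-decreasing, Theorem~\ref{MainTh} applies, and one lets $\tau_1'\to\tau_1$, $\tau_2'\to\tau_2$ at the end. This is exactly the ``standard trick'' you allude to.
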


\begin{remark}
1.  If $\ell $ is an integer and if the assumptions of  Corollary \ref{CorNest}  are satisfied for parameters $\tau_1$ and $\tau_2$ 
such that $\tau_1>(\ell-1)+(\tau_2-\tau_1)\ell$, 
then the conclusion yields
$\dim_{\Q} (\Q + \Q \vartheta_1 + \cdots + \Q \vartheta_m) \ge \ell+1$. In particular,  if $\tau_1 > (m-1) + m(\tau_1 - \tau_2)$, then $1, \vartheta_1, \ldots, \vartheta_m$ are linearly independent over $\Q$.
\\
2. 
The hypotheses of  Corollary \ref{CorNest}  imply that $L_n(1,\underline{\vartheta})\not=0$ and $\lim_{n\rightarrow \infty} L_n(1,\underline{\vartheta})=0$, therefore  
$\underline{\vartheta}
\notin \Q^m$. 
When the aim is to prove $\underline{\vartheta} \notin \Q^m$, there is no need of a lower bound for $|L_n(1,\underline{\vartheta})|$. 
However, assuming a lower bound for $|L_n(1,\underline{\vartheta})|$ enables Nesterenko in \cite{MR783238} to reach a quantitative estimate.
Consider for instance the special case $m=1$ with $\vartheta_1=\vartheta\in\R$: under the assumptions of Theorem \ref{Nest}, 
$\vartheta$ is not a Liouville number.
Conversely,  Theorem 1 of \cite{FischlerRivoal} shows that for a real number  $\vartheta=\vartheta_1$ which  is not a Liouville number, the assumptions of   Corollary \ref{CorNest} with $m=1$ are satisfied for suitable values of the parameters. 

\end{remark}

\begin{proof}[Proof of Corollary \ref{CorNest}]
Set $\vartheta_0 =1$. Let $\{\xi_0, \xi_1,\ldots, \xi_r\}$ with $\xi_0 =1$ be a basis of the vector space spanned by $\vartheta_0, \ldots, \vartheta_m$ over $\Q$.
Let $\underline{\xi} = (\xi_1, \ldots, \xi_r) \in \R^{r}$. 
Then for $0 \le i \le m$, there are integers $d , c_{i0}, \ldots, c_{ir}$ with $d >0$ such that
$d \vartheta_i = \displaystyle\sum_{j=0}^{r} c_{ij} \xi_j$.
For any $n \in \N$, define a linear form in $r+1$ variables $Y_0, \ldots, Y_r $ with integer coefficients by 
\begin{align*}
\Lambda_n (Y_0, \ldots, Y_r) &= L_n \left( \displaystyle\sum_{j=0}^{r} c_{0j} Y_j, \ldots, \displaystyle\sum_{j=0}^{r} c_{mj} Y_j \right) \\
&= \displaystyle\sum_{i=0}^{m} c_{i0} \ell_{in} Y_0 + \cdots + \displaystyle\sum_{i=0}^{m} c_{ir}\ell_{in} Y_r.
\end{align*}
From the definition of $\Lambda_n$ we infer $\Lambda_n (1, \underline{\xi}) = d \cdot L_n(1, \underline{\vartheta})$. 
We apply Theorem \ref{MainTh} with a finite sequence $\xi_0, \xi_1, \ldots, \xi_r$ with $r_n=r$  for all $n$. Let $\tau'_1$ and $\tau'_2$ satisfy $\tau'_1>\tau_1$ and $\tau'_2<\tau_2$. 
We take
$$
Q_n =   c e^{\sigma(n)},\quad A_n = e^{\tau'_2 \sigma(n+1)}
\quad\hbox{ and }\quad 
B_n = e^{(\tau'_1 -\tau'_2)\sigma(n)},
$$
where $c > 0$ is a suitable constant. 
We obtain
$$
e^{\tau'_2  \sigma(n+1)}  \le 2^{r+1} c^r e^{r(\tau'_1 - \tau'_2 +1) \sigma(n)}.
$$  
Since $\sigma(n)$ is non--decreasing and tends to infinity, this estimate implies
$$
 \tau'_2 \le r (1+\tau'_1 - \tau'_2).
 $$
Since this last inequality  holds for all $(\tau'_1,\tau'_2)$ with $\tau'_1>\tau_1$ and $\tau'_2<\tau_2$, we deduce 
$r \ge \tau_2 / (1+\tau_1 - \tau_2)$. 
Therefore 
$$
\dim_{\Q} (\Q + \Q \vartheta_1 + \cdots + \Q \vartheta_m) = r+1 \ge
\frac{ \tau_2 }{1+\tau_1 - \tau_2} +1 = \frac{1 + \tau_1}{1+ \tau_1 - \tau_2}\cdotp
$$
\end{proof}

\section{Criteria for algebraic independence}

We deduce from Theorem \ref{MainTh} the following criterion for algebraic independence:
\smallskip
\smallskip
\smallskip 
\begin{cor}  \label{CorAlg}
Let $\vartheta_1, \ldots, \vartheta_t$ be real numbers, $(d_n)_{n \ge 0}$  a sequence of positive integers, $(\alpha_n)_{n \ge 0}, (\beta_n)_{n \ge 0}$ and $(\gamma_n)_{n \ge 0}$  sequences of positive real numbers such that $\lim_{n \rightarrow \infty} d_n^{-t} \gamma_n = \infty$ and, for all sufficiently large integers $n$, 
$$
\alpha_n + \beta_n - \gamma_{n-1} \le \alpha_{n+1} + \beta_{n+1} - \gamma_{n}.
$$
Assume that there exists  a sequence $(P_n)_{n \ge 0}$ of polynomials in $\Z[X_1, \ldots, X_t]$, where $P_n$ has total degree at most $d_n$ and length at most $e^{\beta_n}$, satisfying
$$
e^{-\alpha_n} \le |P_n(\vartheta_1, \ldots, \vartheta_t)| \le e^{-\gamma_n}.
$$
Then 
$$
\gamma_n \le \log{2} + \left( \dbinom{d_n +t}{t} -1 \right) (\alpha_n + \beta_n - \gamma_{n-1} + \log{2})
$$ 
for all sufficiently large integers $n$.
\end{cor}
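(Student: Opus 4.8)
The plan is to \emph{linearize}: a polynomial of total degree at most $d$ is just a linear form in the finitely many monomials of degree $\le d$, so Corollary~\ref{CorAlg} should reduce to Theorem~\ref{MainTh} applied to the sequence $\underline{\xi}$ listing the values at $(\vartheta_1,\dots,\vartheta_t)$ of all such monomials.

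Concretely, I would first fix an enumeration $\mu_0,\mu_1,\mu_2,\dots$ of the monomials $X_1^{a_1}\cdots X_t^{a_t}$, $(a_1,\dots,a_t)\in\N^t$, arranged by non-decreasing total degree (ties broken arbitrarily), so that $\mu_0=1$ and, for every integer $d\ge 0$, the monomials of total degree $\le d$ are exactly $\mu_0,\dots,\mu_{N(d)-1}$ with $N(d)=\binom{d+t}{t}$. Set $\xi_i=\mu_i(\vartheta_1,\dots,\vartheta_t)$, so that $\xi_0=1$. Writing $P_n=\sum_{i=0}^{r_n}p_{ni}\mu_i$ with $p_{ni}\in\Z$ and $r_n=N(d_n)-1=\binom{d_n+t}{t}-1$ (the coefficients of monomials of degree $>d_n$ being $0$), the linear form $L_n(\underline{X})=\sum_{i=0}^{r_n}p_{ni}X_i$ in $r_n+1$ variables satisfies $L_n(\underline{\xi})=P_n(\vartheta_1,\dots,\vartheta_t)$, and $\sum_{i=0}^{r_n}|p_{ni}|$ is exactly the length of $P_n$. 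Ordering the monomials by degree is what makes the truncations $(\xi_0,\dots,\xi_{r_n})$ nested as $n$ grows; for $(r_n)$ to be non-decreasing one needs $(d_n)$ to be non-decreasing, which is the natural hypothesis (and is implicit in the form of the conclusion), so I take it for granted.

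Next I would pick the auxiliary sequences so that the three running hypotheses of Theorem~\ref{MainTh} become the three hypotheses of the corollary. Take $Q_n=e^{\beta_n}$, $A_n=e^{\gamma_n}$ and $B_n=e^{\alpha_n-\gamma_{n-1}}$. Then $\sum_{i=0}^{r_n}|p_{ni}|\le e^{\beta_n}=Q_n$; the bound $|P_n(\vartheta_1,\dots,\vartheta_t)|\le e^{-\gamma_n}$ together with $P_n(\vartheta_1,\dots,\vartheta_t)\ne 0$ gives $0<|L_n(\underline{\xi})|\le 1/A_n$; and $|P_{n-1}(\vartheta_1,\dots,\vartheta_t)|\le e^{-\gamma_{n-1}}$, $|P_n(\vartheta_1,\dots,\vartheta_t)|\ge e^{-\alpha_n}$ give $|L_{n-1}(\underline{\xi})|/|L_n(\underline{\xi})|\le e^{\alpha_n-\gamma_{n-1}}=B_n$. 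Since $Q_nB_n=e^{\alpha_n+\beta_n-\gamma_{n-1}}$, the inequality $Q_nB_n\le Q_{n+1}B_{n+1}$ is precisely $\alpha_n+\beta_n-\gamma_{n-1}\le\alpha_{n+1}+\beta_{n+1}-\gamma_n$. Finally, $d_n\ge 1$ forces $\binom{d_n+t}{t}\le\frac{(1+t)^t}{t!}\,d_n^t$, hence $A_n^{1/r_n}=e^{\gamma_n/r_n}\ge e^{c_t\,d_n^{-t}\gamma_n}$ with $c_t=t!/(1+t)^t>0$, which tends to $\infty$ by the assumption $d_n^{-t}\gamma_n\to\infty$.

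Theorem~\ref{MainTh} then yields $A_n\le 2^{r_n+1}(B_nQ_n)^{r_n}$ for all sufficiently large $n$, i.e. $e^{\gamma_n}\le 2^{r_n+1}e^{r_n(\alpha_n+\beta_n-\gamma_{n-1})}$; taking logarithms gives $\gamma_n\le(r_n+1)\log 2+r_n(\alpha_n+\beta_n-\gamma_{n-1})$, and substituting $r_n=\binom{d_n+t}{t}-1$ and regrouping produces exactly
\[
\gamma_n\le\log 2+\left(\binom{d_n+t}{t}-1\right)\left(\alpha_n+\beta_n-\gamma_{n-1}+\log 2\right).
\]
I expect no real obstacle here: once the dictionary above is in place the computation is routine, and the only points needing care are the monotonicity of $(r_n)$ (hence of $(d_n)$), the crude estimate $\binom{d_n+t}{t}=O_t(d_n^t)$ that converts $d_n^{-t}\gamma_n\to\infty$ into the condition $\lim A_n^{1/r_n}=\infty$ of Theorem~\ref{MainTh}, and checking that the graded ordering indeed makes $L_n(\underline{\xi})=P_n(\vartheta_1,\dots,\vartheta_t)$ with the variable sets of the $L_n$ nested.
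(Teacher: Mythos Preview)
Your proposal is correct and follows essentially the same route as the paper: linearize each $P_n$ as a linear form in the monomial values $\xi_{\underline{i}}=\vartheta_1^{i_1}\cdots\vartheta_t^{i_t}$, set $r_n=\binom{d_n+t}{t}-1$, $Q_n=e^{\beta_n}$, $A_n=e^{\gamma_n}$, $B_n=e^{\alpha_n-\gamma_{n-1}}$, and invoke Theorem~\ref{MainTh}. You are in fact more explicit than the paper in two places: you verify $A_n^{1/r_n}\to\infty$ via the crude bound $\binom{d_n+t}{t}=O_t(d_n^t)$, and you flag that the non-decrease of $(r_n)$ requires $(d_n)$ non-decreasing, which the paper silently assumes.
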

\begin{proof}
The number of integer  tuples $\underline{i}=(i_1, \ldots,i_t)$ in $\Z_{\ge 0}^t$ 
with $i_1+\cdots+i_t\le d_n$ 
is $\dbinom{d_n +t}{t}$. 
For each of these $\underline{i}$, let $\xi_{\underline{i}} = \vartheta_1^{i_1} \ldots \vartheta_t^{i_t}$. 
Let $n \in \N$. We write 
$$
P_n(X_1, \ldots, X_t) = \sum_{\underline{i}} a_{\underline{i},n} X_1^{i_1} \ldots X_t^{i_t}.
$$
Put $L_n(\underline{X})= \sum_{\underline{i}} a_{\underline{i},n} X_{\underline{i}}$.
Then $L_n(\underline{\xi}) = P_n(\vartheta_1, \ldots, \vartheta_t)$.
Applying Theorem \ref{MainTh} with   $r_n = \dbinom{d_n +t}{t} -1$, $Q_n = e^{\beta_n}$, $A_n = e^{\gamma_n}$ and $B_n = e^{\alpha_n - \gamma_{n-1}}$, 
we obtain 
\[
\gamma_n \le \log{2} + \left( \dbinom{d_n +t}{t} -1 \right) (\alpha_n + \beta_n - \gamma_{n-1} + \log{2})
\]
 for all sufficiently large integers $n$.
\end{proof}
We deduce from Corollary \ref{CorAlg} the following special cases. In the first one, we consider a sequence of bounded degree polynomials. Since our method relies on linear elimination, our results are sharp when the dimension of the space is not too large. As far as criteria for algebraic independence are concerned, we get sharper results when the degree is bounded. 
\begin{cor} \label{CorAlg0}
Let $\vartheta_1, \ldots, \vartheta_t$ be real numbers, $d$  a positive integer, $\tau_1,\tau_2$  positive real numbers and $\sigma(n)$  a non--decreasing positive  function such that  $\lim_{n \rightarrow \infty} \sigma(n) = \infty$.
Assume that there exists  a sequence $(P_n)_{n \ge 0}$ of polynomials  in $\Z[X_1, \ldots, X_t]$, where $P_n$ has total degree at most $d$ and length at most $e^{\sigma(n)}$, satisfying 
$$
e^{-(\tau_1 + \mathit{o}(1))\sigma(n)}\le |P_n(\vartheta_1, \ldots, \vartheta_t)| \le e^{-(\tau_2  + \mathit{o}(1))\sigma(n+1)}.
$$
Then 
$$
\tau_2 \le \left( \dbinom{d +t}{t} - 1 \right) (1+\tau_1 - \tau_2).
$$
\end{cor}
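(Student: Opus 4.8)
The plan is to deduce Corollary \ref{CorAlg0} from Corollary \ref{CorAlg}, in the same spirit as the deduction of Corollary \ref{CorNest} from Theorem \ref{MainTh}: specialize to the bounded degree $d_n = d$, choose the auxiliary exponent sequences so that, after dividing through and letting $n \to \infty$, only the growth of $\sigma$ survives. First I would fix reals $\tau_1' > \tau_1$ and $\tau_2'$ with $0 < \tau_2' < \tau_2$, taken close enough to $\tau_1$ and $\tau_2$ that $1 + \tau_1' - \tau_2' > 0$. This is possible because, comparing the two bounds $|P_n(\vartheta_1,\ldots,\vartheta_t)| \ge e^{-(\tau_1 + o(1))\sigma(n)}$ and $|P_n(\vartheta_1,\ldots,\vartheta_t)| \le e^{-(\tau_2 + o(1))\sigma(n+1)}$ at one and the same $n$, and using $0 < \sigma(n) \le \sigma(n+1)$, one gets $(\tau_2 + o(1))\sigma(n+1) \le (\tau_1 + o(1))\sigma(n+1)$, hence $\tau_2 \le \tau_1$, so that $1 + \tau_1 - \tau_2 \ge 1 > 0$.

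Next I would apply Corollary \ref{CorAlg} with $d_n = d$, $\beta_n = \sigma(n)$, $\alpha_n = \tau_1' \sigma(n)$ and $\gamma_n = \tau_2' \sigma(n+1)$. One checks routinely that these are positive real sequences; that $d_n^{-t}\gamma_n = d^{-t}\tau_2'\sigma(n+1) \to \infty$; that the hypothesis $\alpha_n + \beta_n - \gamma_{n-1} \le \alpha_{n+1} + \beta_{n+1} - \gamma_n$ reduces to $(1+\tau_1'-\tau_2')\sigma(n) \le (1+\tau_1'-\tau_2')\sigma(n+1)$, which holds since $\sigma$ is non-decreasing and $1 + \tau_1' - \tau_2' > 0$; and that for all large $n$ one has $e^{-\alpha_n} \le |P_n(\vartheta_1,\ldots,\vartheta_t)| \le e^{-\gamma_n}$, the $o(1)$-terms in the hypotheses being eventually dominated by $\tau_1' - \tau_1 > 0$ and $\tau_2 - \tau_2' > 0$ because $\sigma(n) \to \infty$. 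Corollary \ref{CorAlg} then yields, for all sufficiently large $n$,
$$
\tau_2' \sigma(n+1) \le \log 2 + \left(\binom{d+t}{t} - 1\right)\left((1 + \tau_1' - \tau_2')\sigma(n) + \log 2\right).
$$

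Finally I would divide by $\sigma(n+1)$ and pass to the limit. The only point requiring a moment's thought, and the closest thing to an obstacle, is that — in contrast with Theorem \ref{Nest} — nothing is assumed about the ratio $\sigma(n+1)/\sigma(n)$; but this is harmless, since $\sigma(n)/\sigma(n+1) \le 1$ and the coefficient on the right is positive, while both $\log 2$ terms are $o(\sigma(n+1))$. Hence $\tau_2' \le (\binom{d+t}{t} - 1)(1 + \tau_1' - \tau_2')$, and letting $\tau_1' \downarrow \tau_1$ and $\tau_2' \uparrow \tau_2$ gives the claimed inequality. I expect no genuine difficulty: the argument is a direct specialization of Corollary \ref{CorAlg}, the only mild subtleties being the absorption of the $o(1)$-terms into the strict inequalities $\tau_1' > \tau_1$, $\tau_2' < \tau_2$ and the observation that merely knowing $\sigma$ to be non-decreasing suffices when taking the limit.
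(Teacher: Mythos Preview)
Your proposal is correct and follows essentially the same route as the paper's proof: specialize Corollary~\ref{CorAlg} with $d_n=d$, $\alpha_n=\tau_1'\sigma(n)$, $\beta_n=\sigma(n)$, $\gamma_n=\tau_2'\sigma(n+1)$ for $\tau_1'>\tau_1$, $\tau_2'<\tau_2$, divide by $\sigma(n+1)$, and let $\tau_1'\to\tau_1$, $\tau_2'\to\tau_2$. Your write-up is in fact more explicit than the paper's in verifying the monotonicity hypothesis on $\alpha_n+\beta_n-\gamma_{n-1}$ and in handling the passage to the limit via $\sigma(n)/\sigma(n+1)\le 1$; the only superfluous caveat is ``taken close enough'', since $\tau_2\le\tau_1$ already forces $1+\tau_1'-\tau_2'>1$ for \emph{any} such $\tau_1',\tau_2'$.
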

\begin{proof} 
Let $\varepsilon$ be a positive real number $< \tau_2$. Set $\tau'_1=\tau_1+\varepsilon$ and $\tau'_2=\tau_2-\varepsilon$. 
We apply Corollary \ref{CorAlg} with $d_n =d$, $\alpha_n =\tau'_1 \sigma(n)$, $\beta_n = \sigma(n)$ and $\gamma_n =\tau'_2 \sigma(n+1)$. The conclusion follows by letting $\varepsilon$ tend to $0$.
\end{proof}
Here are examples with  sequences of polynomials whose degrees could tend to infinity.
\begin{cor} \label{CorAlg1}
Let $\vartheta_1, \ldots, \vartheta_t$ be real numbers, $\beta$, $\delta$, $\kappa$ and $\lambda$  positive real numbers, $\alpha$ and $\gamma$  real numbers. Assume  $\kappa \le 1/t$.
Further, assume that  there exists  a sequence $(P_n)_{n \ge 0}$ of polynomials in $\Z[X_1, \ldots, X_t]$, where $P_n$ has total degree at most $\delta n^{\kappa}+ \mathit{o}(n^{\kappa})$ and length at most $e^{\beta n + \mathit{o}(n)}$, satisfying
$$
e^{- \lambda n^{1 + t \kappa} -  (\alpha   + \mathit{o}(1))n} \le |P_n(\vartheta_1, \ldots, \vartheta_t)| \le e^{- \lambda n^{1 + t \kappa} - (\gamma   + \mathit{o}(1))n}.
$$
\\
Then 
$$ 
\lambda (1 -2 \delta^t/ t!) \le \delta^t(\alpha + \beta -\gamma)/t! \quad\hbox{ when }\quad \kappa = 1/t,
$$
and 
$$
\lambda  \le \delta^t(\alpha + \beta -\gamma)/t!\quad\hbox{ when }\quad \kappa < 1/t.
$$ 
\end{cor}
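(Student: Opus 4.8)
The plan is to deduce Corollary~\ref{CorAlg1} from Corollary~\ref{CorAlg}, in the same spirit as the deduction of Corollary~\ref{CorAlg0}: fix an auxiliary $\varepsilon>0$, choose the sequences $(d_n),(\alpha_n),(\beta_n),(\gamma_n)$ so that the hypotheses of Corollary~\ref{CorAlg} are met, apply it, and then let $\varepsilon\to0$ in the resulting inequality. Concretely, I would pick a non-decreasing sequence $(d_n)$ of positive integers with $\deg P_n\le d_n\le(\delta+\varepsilon)n^{\kappa}$ for $n$ large (possible since $\deg P_n\le\delta n^{\kappa}+o(n^{\kappa})$), and set
\[
\alpha_n=\lambda n^{1+t\kappa}+(\alpha+\varepsilon)n,\qquad
\beta_n=(\beta+\varepsilon)n,\qquad
\gamma_n=\lambda n^{1+t\kappa}+(\gamma-\varepsilon)n.
\]
For $n$ large these are positive, and the degree, length and value hypotheses of Corollary~\ref{CorAlg1} give, for $n$ large, that $P_n$ has total degree at most $d_n$, length at most $e^{\beta_n}$, and $e^{-\alpha_n}\le|P_n(\vartheta_1,\ldots,\vartheta_t)|\le e^{-\gamma_n}$; moreover $d_n^{-t}\gamma_n\ge\frac{\lambda}{2(\delta+\varepsilon)^t}\,n\to\infty$. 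Thus every hypothesis of Corollary~\ref{CorAlg} is in place except, possibly, the monotonicity condition $\alpha_n+\beta_n-\gamma_{n-1}\le\alpha_{n+1}+\beta_{n+1}-\gamma_n$.

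Checking that monotonicity condition is the delicate point. Expanding,
\[
\alpha_n+\beta_n-\gamma_{n-1}=\lambda\bigl(n^{1+t\kappa}-(n-1)^{1+t\kappa}\bigr)+(\alpha+\beta-\gamma+3\varepsilon)\,n+(\gamma-\varepsilon).
\]
If $\kappa<1/t$, the first term is non-decreasing and is $O(n^{t\kappa})$ with $t\kappa<1$, so the whole expression is eventually non-decreasing as soon as $\alpha+\beta-\gamma+3\varepsilon\ge0$; if $\kappa=1/t$, that term equals $\lambda(2n-1)$, the expression is affine in $n$ with slope $2\lambda+\alpha+\beta-\gamma+3\varepsilon$, and one needs that slope to be $\ge0$. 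When the relevant quantity---$\alpha+\beta-\gamma$ if $\kappa<1/t$, and $2\lambda+\alpha+\beta-\gamma$ if $\kappa=1/t$---is $\ge0$, monotonicity holds for every $\varepsilon>0$ and we may later let $\varepsilon\to0$; when it is $<0$, I would instead fix $\varepsilon$ equal to one third of its absolute value, which is $>0$, so that the expression is still weakly non-decreasing.

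Once Corollary~\ref{CorAlg} applies, it gives $\gamma_n\le\log2+\left(\dbinom{d_n+t}{t}-1\right)\left(\alpha_n+\beta_n-\gamma_{n-1}+\log2\right)$ for $n$ large, and it only remains to extract the asymptotics. I would use $\dbinom{d_n+t}{t}-1=\frac{d_n^t}{t!}(1+o(1))\le\frac{(\delta+\varepsilon)^t}{t!}n^{t\kappa}(1+o(1))$ together with $\alpha_n+\beta_n-\gamma_{n-1}+\log2=(\alpha+\beta-\gamma+3\varepsilon)n+O(n^{t\kappa})$ (respectively $(2\lambda+\alpha+\beta-\gamma+3\varepsilon)n+O(1)$ when $\kappa=1/t$), then divide through by $n^{1+t\kappa}$ and send $n\to\infty$. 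The left-hand side tends to $\lambda>0$; the comparison of $2t\kappa$ with $1+t\kappa$---strict inequality $2t\kappa<1+t\kappa$ precisely for $\kappa<1/t$---makes the $n^{2t\kappa}$ error on the right negligible in that range, leaving $\lambda\le\frac{\delta^t}{t!}(\alpha+\beta-\gamma+3\varepsilon)$, and letting $\varepsilon\to0$ yields $\lambda\le\delta^t(\alpha+\beta-\gamma)/t!$ when $\alpha+\beta-\gamma\ge0$; in the sign-degenerate subcase the same limit forces $\lambda\le0$, a contradiction, so there the hypotheses cannot hold and the conclusion is vacuous. When $\kappa=1/t$ the analogous computation---now a genuine comparison of $n^2$ terms---gives $\lambda\le\delta^t(2\lambda+\alpha+\beta-\gamma)/t!$, that is, $\lambda(1-2\delta^t/t!)\le\delta^t(\alpha+\beta-\gamma)/t!$. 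I expect the verification of the monotonicity hypothesis of Corollary~\ref{CorAlg}, together with the treatment of the borderline sign conditions, to be the only genuine obstacle; the remainder is routine bookkeeping of powers of $n$.
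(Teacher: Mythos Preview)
Your proposal is correct and follows the paper's strategy: apply Corollary~\ref{CorAlg} with $\varepsilon$-perturbed parameters, extract the leading asymptotics, and let $\varepsilon\to 0$. The paper streamlines two of your steps---it observes that the double inequality on $|P_n|$ already forces $\alpha\ge\gamma$ (so $\alpha+\beta-\gamma>0$ since $\beta>0$, and your sign-degenerate subcase never arises), and for $\kappa<1/t$ it sets $\alpha_n=\lambda(n-1)^{1+t\kappa}+(\alpha+\varepsilon)n$ instead of $\lambda n^{1+t\kappa}+(\alpha+\varepsilon)n$, which makes $\alpha_n+\beta_n-\gamma_{n-1}$ exactly affine in $n$ and the monotonicity check trivial.
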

\begin{proof} 
Let $\varepsilon$ be a positive real number. Set $\alpha'=\alpha+\varepsilon$,  $\beta'=\beta+\varepsilon$,  $\gamma'=\gamma - \varepsilon$ and $\delta'=\delta+\varepsilon$.
Let $d_n = \delta' n^{\kappa}$, $\beta_n = \beta' n$, $\gamma_n =\lambda n^{1 + t \kappa} + \gamma' n$,
$$
\alpha_n =
\begin{cases}
 \lambda n^2 + \alpha' n & \quad\hbox{ when }\quad \kappa = 1/t,
\\
 \lambda (n-1)^{1 + t \kappa} + \alpha' n & \quad\hbox{ when }\quad \kappa < 1/t.
 \end{cases}
$$ 
We obtain $e^{-\alpha_n} \le |P_n(\vartheta_1, \ldots, \vartheta_t)| \le e^{-\gamma_n}$ for $n$ sufficiently large.
Besides,
$$
\alpha_n+\beta_n-\gamma_{n-1} =
\begin{cases}
 (2 \lambda + \alpha'+\beta'-\gamma')n -\lambda + \gamma' & \quad\hbox{ when }\quad \kappa = 1/t,
 \\
 (\alpha'+\beta'-\gamma')n+\gamma'  &\quad\hbox{ when }\quad \kappa < 1/t.
 \end{cases}
$$
Since the hypothesis implies that $\alpha -\gamma \ge 0$, it follows that the sequence 
$$
(\alpha_n+\beta_n-\gamma_{n-1})_{n\ge 1}
$$
is non--decreasing
for sufficiently large $n$.
The conclusion follows by applying Corollary \ref{CorAlg} and letting $\varepsilon$ tend to $0$.
\end{proof}

\begin{cor} 
Let $\vartheta_1, \ldots, \vartheta_t$ be real numbers, $(d_n)_{n \ge 0}$ a sequence of positive integers and $\beta$, $\kappa$ and $\lambda$  positive real numbers.  Assume $$
d_n = \mathit{o}(n^{1/t})
\quad\hbox{and}\quad d_n^t - d_{n-1}^t \le \kappa 
$$ 
for all sufficiently large integers $n$.
Further, assume that  there exists  a sequence $(P_n)_{n \ge 0}$ of polynomials in $\Z[X_1, \ldots, X_t]$,  where $P_n$ has total degree at most $d_n$ and length at most $e^{\beta n + \mathit{o}(n)}$ satisfying
$$
|P_n(\vartheta_1, \ldots, \vartheta_t)| = e^{- (\lambda n d_n^t + \mathit{o}(n))}.
$$
\\
Then 
$$
\lambda (t!-\kappa) \le \beta.
$$
\end{cor}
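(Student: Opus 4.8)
The plan is to derive this corollary from Corollary \ref{CorAlg} in exactly the same spirit as Corollary \ref{CorAlg1}, by choosing the parameter sequences $(\alpha_n)$, $(\beta_n)$, $(\gamma_n)$ appropriately and then letting an auxiliary $\varepsilon>0$ tend to $0$. First I would introduce $\varepsilon>0$ and set $\beta_n=(\beta+\varepsilon)n$, $\gamma_n=\lambda n d_n^t+\varepsilon n$ (absorbing the $\mathit{o}(n)$ into the $\varepsilon n$ term for $n$ large), and $\alpha_n=\lambda(n-1)d_{n-1}^t+\varepsilon n$; the hypothesis $|P_n(\underline\vartheta)|=e^{-(\lambda n d_n^t+\mathit{o}(n))}$ then gives $e^{-\alpha_n}\le|P_n(\vartheta_1,\dots,\vartheta_t)|\le e^{-\gamma_n}$ for $n$ sufficiently large.

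Next I would check the two structural hypotheses of Corollary \ref{CorAlg}. The growth condition $\lim_n d_n^{-t}\gamma_n=\infty$ is immediate since $d_n^{-t}\gamma_n\ge\lambda n$. For the monotonicity of $(\alpha_n+\beta_n-\gamma_{n-1})_{n\ge1}$, I compute
\[
\alpha_n+\beta_n-\gamma_{n-1}=\lambda(n-1)d_{n-1}^t+(\beta+2\varepsilon)n-\lambda(n-1)d_{n-1}^t-\varepsilon(n-1)=(\beta+\varepsilon)n+\varepsilon,
\]
which is strictly increasing in $n$, so the required inequality holds for all sufficiently large $n$ (in fact for all $n\ge1$). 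Then Corollary \ref{CorAlg} applies and yields, for all sufficiently large $n$,
\[
\gamma_n\le\log 2+\left(\binom{d_n+t}{t}-1\right)\bigl(\alpha_n+\beta_n-\gamma_{n-1}+\log 2\bigr).
\]

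Finally I would plug in the explicit values and extract the asymptotics as $n\to\infty$. Since $d_n=\mathit{o}(n^{1/t})$, we have $\binom{d_n+t}{t}-1=d_n^t/t!+\mathit{o}(d_n^t)=d_n^t/t!+\mathit{o}(n)$, and $\alpha_n+\beta_n-\gamma_{n-1}=(\beta+\varepsilon)n+\varepsilon$. The left side is $\lambda n d_n^t+\varepsilon n$. Thus the inequality reads, after dividing by $n$,
\[
\lambda d_n^t+\varepsilon\le\frac{\log 2}{n}+\Bigl(\tfrac{d_n^t}{t!}+\mathit{o}(1)\Bigr)\Bigl(\beta+\varepsilon+\tfrac{\varepsilon+\log 2}{n}\Bigr).
\]
Using $d_n^t-d_{n-1}^t\le\kappa$ I would telescope to get $d_n^t\ge d_{n_0}^t-(n-n_0)\kappa$ is the wrong direction; rather, the useful bound is that $d_n^t$ can be large, so I rearrange to isolate $\lambda$: from $\lambda d_n^t\le\frac{d_n^t}{t!}(\beta+\varepsilon)+\mathit{o}(n)/n\cdot(\text{stuff})$ plus lower-order terms. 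The hypothesis $d_n^t-d_{n-1}^t\le\kappa$ enters because $\gamma_{n-1}$ involves $d_{n-1}^t$ while $\gamma_n$ involves $d_n^t$: more precisely one should not replace $\gamma_{n-1}$ by its naive value but keep $\alpha_n+\beta_n-\gamma_{n-1}$ honestly, where the term $\lambda(n-1)(d_{n-1}^t-d_n^t)$ would appear if I had instead set $\alpha_n=\lambda(n-1)d_n^t+\varepsilon n$; with the cleaner choice $\alpha_n=\lambda(n-1)d_{n-1}^t+\varepsilon n$ above this subtlety is already handled. The real point where $\kappa$ must be used is to guarantee that $\alpha_n\ge$ the claimed lower bound $\lambda n d_n^t-\mathit{o}(n)$ up to acceptable error: since $\lambda(n-1)d_{n-1}^t=\lambda n d_n^t-\lambda d_n^t-\lambda(n-1)(d_n^t-d_{n-1}^t)\ge\lambda n d_n^t-\lambda d_n^t-\lambda(n-1)\kappa=\lambda n d_n^t-\mathit{o}(n)$ because $d_n^t=\mathit{o}(n)$ and $\lambda\kappa(n-1)$ is only $O(n)$ — so in fact $\kappa$ is needed precisely to keep $\alpha_n=\lambda nd_n^t+O(n)$ rather than something larger, which is what makes $\alpha_n+\beta_n-\gamma_{n-1}=O(n)$ and hence the final inequality nontrivial. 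Carrying the resulting $\lambda nd_n^t\le\frac{d_n^t}{t!}(\beta+\varepsilon)n+\lambda\kappa\cdot\frac{d_n^t}{t!}n+\mathit{o}(n^2/?)$ through and dividing by $nd_n^t$, then letting $n\to\infty$ and $\varepsilon\to0$, gives $\lambda\le\frac{1}{t!}(\beta+\lambda\kappa)$, i.e. $\lambda(t!-\kappa)\le\beta$.

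The main obstacle will be the bookkeeping in the last step: tracking which terms are genuinely $\mathit{o}(nd_n^t)$ after multiplying the binomial coefficient by $\alpha_n+\beta_n-\gamma_{n-1}=O(n)$, and verifying that the cross term $\binom{d_n+t}{t}\cdot\lambda(n-1)(d_n^t-d_{n-1}^t)$-type contribution is exactly what produces the $\lambda\kappa/t!$ on the right-hand side rather than something uncontrolled. Once the choice of $\alpha_n$ incorporating $d_{n-1}^t$ is made, everything else is the routine $\varepsilon\to0$ limiting argument already used in the preceding corollaries.
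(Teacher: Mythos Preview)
Your overall plan --- apply Corollary~\ref{CorAlg} with suitable $\alpha_n,\beta_n,\gamma_n$ and let $\varepsilon\to 0$ --- is exactly the paper's, but your specific choices do not verify the hypotheses. First, with $\gamma_n=\lambda n d_n^t+\varepsilon n$ the bound $|P_n(\underline{\vartheta})|\le e^{-\gamma_n}$ would require $\lambda n d_n^t+o(n)\ge\lambda n d_n^t+\varepsilon n$, which fails; the sign of the $\varepsilon n$ term must be reversed here (the paper takes $\gamma_n=\lambda n d_n^t-\varepsilon n$). More seriously, your $\alpha_n=\lambda(n-1)d_{n-1}^t+\varepsilon n$ is too small to yield $e^{-\alpha_n}\le |P_n(\underline{\vartheta})|$: one needs $\alpha_n\ge\lambda n d_n^t+o(n)$, and
\[
\alpha_n-\lambda n d_n^t=-\lambda(n-1)(d_n^t-d_{n-1}^t)-\lambda d_n^t+\varepsilon n,
\]
which under the sole hypothesis $d_n^t-d_{n-1}^t\le\kappa$ is only $\ge -\lambda\kappa(n-1)+\varepsilon n-o(n)$. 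For small $\varepsilon$ this is negative of order $n$, not $o(n)$. You actually notice this yourself (``$\lambda\kappa(n-1)$ is only $O(n)$'') but then wrongly conclude the discrepancy is $o(n)$.

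The fix is precisely where the $\kappa$ in the conclusion comes from: take $\alpha_n=\lambda(n-1)(d_{n-1}^t+\kappa)+\varepsilon n$, as the paper does. Then the lower bound on $|P_n|$ holds (using $d_n^t\le d_{n-1}^t+\kappa$ and $d_{n-1}^t=o(n)$), and one computes
\[
\alpha_n+\beta_n-\gamma_{n-1}=(\lambda\kappa+\beta+3\varepsilon)n-\lambda\kappa-\varepsilon,
\]
which is non--decreasing and now carries the essential $\lambda\kappa n$ term. Plugging into Corollary~\ref{CorAlg}, dividing by $n d_n^t$, using $\binom{d_n+t}{t}-1\sim d_n^t/t!$, and letting $n\to\infty$ and then $\varepsilon\to 0$ gives $\lambda\le(\lambda\kappa+\beta)/t!$, i.e.\ $\lambda(t!-\kappa)\le\beta$. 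With your choices, $\alpha_n+\beta_n-\gamma_{n-1}$ contains no $\kappa$ at all, so even if the hypotheses were (falsely) granted you would arrive at the stronger, generally false, inequality $\lambda t!\le\beta$; the attempt in your last paragraph to conjure a $\lambda\kappa\cdot d_n^t n/t!$ term on the right has no source in the inequality you actually derived.
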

\begin{proof}  
Let $\varepsilon>0$, $\alpha_n =\lambda (n-1) (d_{n-1}^t + \kappa ) +\varepsilon n$, $\gamma_n =\lambda n d_n^t - \varepsilon n$ and $\beta_n = \beta n + \varepsilon n$.
For $n$ sufficiently large, since
$$
\lambda n (d_n^t - d_{n-1}^t) + \lambda (d_{n-1}^t + \kappa) + \mathit{o}(n)\le \lambda n \kappa + \mathit{o}(n) \le \lambda n \kappa + \varepsilon n,
$$
we have 
$$
\lambda n d_n^t + \mathit{o}(n) \le \lambda (n-1) (d_{n-1}^t + \kappa ) +\varepsilon n.
$$
It follows that
$|P_n(\vartheta_1, \ldots, \vartheta_t)| \ge e^{-\alpha_n}$.  

Also, $|P_n(\vartheta_1, \ldots, \vartheta_t)| \le e^{-\gamma_n}$ for $n$ sufficiently large.

Since $\alpha_n+\beta_n-\gamma_{n-1} = (\lambda \kappa + \beta + 3\varepsilon) n - \lambda \kappa - \varepsilon$, it follows that the sequence $(\alpha_n+\beta_n-\gamma_{n-1})_{n\ge 1}$ is non--decreasing
for sufficiently large $n$.
The conclusion follows by applying Corollary \ref{CorAlg} and letting $\varepsilon$ tend to $0$.
\end{proof}


Algebraic independence results follow from our criteria: 

\begin{cor} \label{CorDepAlg}
Let $\vartheta_1,\ldots, \vartheta_t$ be real numbers,  $d$, $\delta$ two positive
integers with $d\ge \delta$   and $\tau$,  $\eta$ two  positive real numbers  satisfying
$$
\frac{\tau}{ 1+\eta}>
\binom{t+d}{t}-\binom{t+d-\delta}{t}-1.
$$
Let $\bigl(\sigma(n)\bigr)_{n\ge 1}$ be a non--decreasing  sequence of real
numbers which tends to infinity.
Assume that   there is a sequence $(P_n)_{n\ge n_0}$ of polynomials in
$\Z[X_1,\ldots,X_t]$, where $P_n$ has total degree $\le d$ and length $\le e^{\sigma(n)}$, such
that
$$
e^{-(\tau +\eta+\mathit{o}(1))\sigma(n)}
\le |P_n(\vartheta_1,\ldots,\vartheta_t)|
\le e^{-(\tau+\mathit{o}(1)) \sigma(n+1)}.
$$
Then $\vartheta_1,\ldots,\vartheta_t$ do not satisfy any algebraic dependence relation
with rational coefficients of degree $\le \delta$.
\end{cor}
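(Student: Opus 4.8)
The plan is to argue by contradiction. Suppose $\vartheta_1,\ldots,\vartheta_t$ satisfy a non-zero algebraic dependence relation of degree $\le\delta$ with rational coefficients; clearing denominators, we may take it to be $R\in\Z[X_1,\ldots,X_t]\setminus\{0\}$ with $\deg R\le\delta$ and $R(\vartheta_1,\ldots,\vartheta_t)=0$. Such an $R$ forces the $\Q$-span of the monomials $\vartheta_1^{i_1}\cdots\vartheta_t^{i_t}$ of total degree $\le d$ — which a priori could have dimension up to $\binom{t+d}{t}$ — to have dimension at most $\binom{t+d}{t}-\binom{t+d-\delta}{t}$. Rewriting the forms $P_n$ in a $\Q$-basis of this span and feeding them to Theorem \ref{MainTh} with a constant $r_n$, exactly as in the proof of Corollary \ref{CorNest}, will then give a lower bound for this dimension that contradicts the hypothesis $\tau/(1+\eta)>\binom{t+d}{t}-\binom{t+d-\delta}{t}-1$.

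The first step is the dimension count. For every monomial $M$ in $X_1,\ldots,X_t$ of total degree $\le d-\delta$ — there are $\binom{t+d-\delta}{t}$ of these — the polynomial $MR$ has degree $\le d$ and vanishes at $(\vartheta_1,\ldots,\vartheta_t)$, and since $\Q[X_1,\ldots,X_t]$ is an integral domain and $R\ne0$ the polynomials $MR$ are $\Q$-linearly independent. Hence the $\Q$-linear evaluation map on the space of polynomials of degree $\le d$ (which has dimension $\binom{t+d}{t}$) has a kernel of dimension $\ge\binom{t+d-\delta}{t}$, so its image $V:=\mathrm{span}_{\Q}\{\vartheta_1^{i_1}\cdots\vartheta_t^{i_t}\,:\,i_1+\cdots+i_t\le d\}$ satisfies $\dim_{\Q}V\le\binom{t+d}{t}-\binom{t+d-\delta}{t}$. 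Since $1\in V$, choose a $\Q$-basis $\xi_0=1,\xi_1,\ldots,\xi_r$ of $V$; then $r\le\binom{t+d}{t}-\binom{t+d-\delta}{t}-1$.

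For the second step, fix a positive integer $D$ with $D\,\vartheta_1^{i_1}\cdots\vartheta_t^{i_t}\in\Z\xi_0+\cdots+\Z\xi_r$ for every $\underline{i}$ with $i_1+\cdots+i_t\le d$. Substituting these expressions into $P_n$ gives, for each $n$, a linear form $\Lambda_n$ in $r+1$ variables with integer coefficients such that $\Lambda_n(\underline{\xi})=D\,P_n(\vartheta_1,\ldots,\vartheta_t)$ with $\underline{\xi}=(1,\xi_1,\ldots,\xi_r)$, and of length at most $C\,e^{\sigma(n)}$ for a constant $C$ depending only on $D$ and the $\xi_j$. I would then apply Theorem \ref{MainTh} with $r_n=r$, and with $Q_n=C\,e^{\sigma(n)}$, $A_n=e^{\tau'\sigma(n+1)}$, $B_n=e^{(\tau''-\tau')\sigma(n)}$ for fixed parameters $\tau'<\tau<\tau+\eta<\tau''$: the length bound is the first hypothesis; the upper bound on $|P_n(\vartheta_1,\ldots,\vartheta_t)|$, with $\sigma(n)\to\infty$ absorbing the constant $D$, gives $0<|\Lambda_n(\underline{\xi})|\le 1/A_n$ for large $n$; the lower bound on $|P_n|$ together with the upper bound on $|P_{n-1}|$ gives $|\Lambda_{n-1}(\underline{\xi})|/|\Lambda_n(\underline{\xi})|\le B_n$; the product $Q_nB_n$ is non-decreasing; and $A_n^{1/r}\to\infty$ provided $r\ge1$ — while $r=0$ is impossible, since then $V=\Q$ would force $\vartheta_i\in\Q$ for all $i$, hence $D\,P_n(\vartheta_1,\ldots,\vartheta_t)\in\Z\setminus\{0\}$, contradicting $|P_n(\vartheta_1,\ldots,\vartheta_t)|\to0$. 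Theorem \ref{MainTh} then yields $A_n\le 2^{r+1}(B_nQ_n)^r$ for large $n$; taking logarithms, using $\sigma(n)\le\sigma(n+1)$, dividing by $\sigma(n+1)\to\infty$, and finally letting $\tau'\uparrow\tau$ and $\tau''\downarrow\tau+\eta$ gives $\tau\le r(1+\eta)$. Combined with $r\le\binom{t+d}{t}-\binom{t+d-\delta}{t}-1$ this contradicts the hypothesis, and the proof is complete.

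I expect the substantive point to be the dimension reduction: observing that a degree-$\le\delta$ relation kills precisely $\binom{t+d-\delta}{t}$ dimensions among the degree-$\le d$ monomials, and that this is exactly what the parameter $r_n$ of Theorem \ref{MainTh} measures. The remainder is bookkeeping — checking that the three hypotheses of Theorem \ref{MainTh} survive the change of basis for all sufficiently large $n$ (with the constants $D$, $C$ and the $o(1)$ terms absorbed by $\sigma(n)\to\infty$ and by the slack between $\tau'$, $\tau$, $\tau+\eta$, $\tau''$) and disposing of the degenerate case $r=0$.
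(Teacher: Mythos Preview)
Your proof is correct and follows essentially the same approach as the paper. The only organizational difference is that the paper cites Corollary~\ref{CorNest} to obtain the lower bound $\dim_{\Q}V\ge 1+\tau/(1+\eta)$ and \emph{then} derives the contradiction from the upper bound, whereas you establish the upper bound first and then apply Theorem~\ref{MainTh} directly---in effect re-doing the proof of Corollary~\ref{CorNest} inline; the content is identical.
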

\begin{proof} 
We use Corollary \ref{CorNest} with $\tau_1=\tau+\eta$, $\tau_2=\tau$, and $1,\vartheta_1,\ldots,\vartheta_m$ in  Corollary \ref{CorNest} replaced by 
$\vartheta_1^{i_1}\cdots\vartheta_t^{i_t}$ with $i_1+\cdots+i_t \le d$. 
Then the dimension of the subspace of $\R$ over $\Q$ spanned by $\{ \vartheta_1^{i_1}\cdots\vartheta_t^{i_t} ~ | ~  i_1+\cdots+i_t\le d  \}$ is bounded below by
$$
   1+\frac{\tau}{1+\eta}\cdotp
$$

Now, we suppose that there exists a polynomial $Q \in \Z[X_1,\ldots,X_t]$ of total degree $\le \delta$ such that $Q(\vartheta_1,\ldots,\vartheta_t) = 0$.
Let $\psi$ be the linear transformation from the set of all polynomials of total degree $\le d$ in $\Q[X_1 , \ldots , X_t]$ to $\R$ defined by 
$\psi (P(X_1, \ldots , X_t)) =  P(\vartheta_1,\ldots,\vartheta_t)$ for $P(X_1, \ldots , X_t) \in \Q[X_1 , \ldots , X_t]$. 
Then
$$
\{ X_1^{j_1} \cdots X_t^{j_t} \cdot Q ~ | ~  j_1+\cdots+j_t\le d -\delta \} \subset \ker{\psi}.
$$
It follows that 
$$
 \dim (\ker{\psi}) \ge   \dbinom{t+d- \delta}{t}. 
 $$
Therefore,  the dimension of the image of $\psi$, which is equal to
$$
\dbinom{t+d}{t} - \dim (\ker{\psi}),
$$
is bounded above by 
$$
  \dbinom{t+d}{t} - \dbinom{t+d- \delta}{t}. 
$$
Since the image of $\psi$ is the subspace of $\R$ over $\Q$ spanned by 
$$
\{ \vartheta_1^{i_1}\cdots\vartheta_t^{i_t} ~ | ~  i_1+\cdots+i_t\le d  \},
$$ 
we have
$$
1 + \frac{\tau}{1+\eta} \le \dbinom{t+d}{t} - \dbinom{t+d- \delta}{t} ,
$$
which is a contradiction.
\end{proof}
When the assumptions of Corollary \ref{CorDepAlg} are satisfied for any $\delta>0$, we deduce the algebraic independence of the numbers $\vartheta_1,\ldots,\vartheta_t$.
\begin{cor}
Let $\vartheta_1,\ldots, \vartheta_t$ be real numbers and $(\tau_d)_{d\ge 1}$, 
$(\eta_d)_{d\ge 1}$ two sequences of positive real numbers satisfying
$$
\frac{\tau_d}{d^{t-1}(1+\eta_d)}\longrightarrow+\infty.
$$
Further, let $\bigl(\sigma(n)\bigr)_{n\ge 1}$ be a non--decreasing  sequence of real
numbers which tends to infinity.
Assume that for all sufficiently large $d$, there is a sequence $(P_n)_{n\ge n_0(d)}$ of
polynomials in $\Z[X_1,\ldots,X_t]$, where $P_n$ has total degree $\le d$ and length $\le e^{\sigma(n)}$,
such that, for $n\ge n_0(d)$,
$$
e^{-(\tau_d+\eta_d)\sigma(n)}
\le |P_n(\vartheta_1,\ldots,\vartheta_t)|
\le e^{-\tau_d
\sigma(n+1)}.
$$
Then $\vartheta_1,\ldots,\vartheta_t$ are algebraically independent.
\end{cor}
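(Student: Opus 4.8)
The plan is to deduce this from Corollary~\ref{CorDepAlg} by fixing a putative nontrivial algebraic dependence relation and showing it is impossible. Suppose for contradiction that $\vartheta_1,\ldots,\vartheta_t$ are algebraically dependent over $\Q$. Then there is a smallest positive integer $\delta$ for which there exists a nonzero $Q\in\Z[X_1,\ldots,X_t]$ of total degree $\le\delta$ with $Q(\vartheta_1,\ldots,\vartheta_t)=0$. I fix this $\delta$ once and for all.

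Next I want to invoke Corollary~\ref{CorDepAlg} with this $\delta$ and a suitable degree bound $d\ge\delta$. For each sufficiently large $d$, the hypotheses here provide a sequence $(P_n)_{n\ge n_0(d)}$ of polynomials of total degree $\le d$ and length $\le e^{\sigma(n)}$ with $e^{-(\tau_d+\eta_d)\sigma(n)}\le|P_n(\vartheta_1,\ldots,\vartheta_t)|\le e^{-\tau_d\sigma(n+1)}$, so with $\tau=\tau_d$ and $\eta=\eta_d$ the analytic hypotheses of Corollary~\ref{CorDepAlg} are met. The remaining hypothesis of that corollary is the arithmetic inequality
$$
\frac{\tau_d}{1+\eta_d}>\binom{t+d}{t}-\binom{t+d-\delta}{t}-1.
$$
If this holds for some admissible $d$, Corollary~\ref{CorDepAlg} tells us that $\vartheta_1,\ldots,\vartheta_t$ satisfy no algebraic dependence relation over $\Q$ of degree $\le\delta$, contradicting the choice of $\delta$ and $Q$. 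So it remains to produce such a $d$.

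The key point is the growth comparison. With $\delta$ fixed, the right-hand side $\binom{t+d}{t}-\binom{t+d-\delta}{t}-1$ is a polynomial in $d$ of degree $t-1$ (the leading terms of the two binomials, each $d^t/t!$, cancel), with leading coefficient $\delta/(t-1)!$; in particular it is $O(d^{t-1})$ as $d\to\infty$. On the other hand the hypothesis $\tau_d/(d^{t-1}(1+\eta_d))\to+\infty$ says precisely that $\tau_d/(1+\eta_d)$ grows faster than any constant multiple of $d^{t-1}$. Hence for all sufficiently large $d$ the displayed inequality holds, and any such $d$ that is also $\ge\delta$ works — large $d$ certainly satisfy $d\ge\delta$. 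This furnishes the required $d$ and completes the contradiction, so $\vartheta_1,\ldots,\vartheta_t$ are algebraically independent.

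The only mildly delicate step is the asymptotic estimate $\binom{t+d}{t}-\binom{t+d-\delta}{t}-1=O(d^{t-1})$ uniformly in the fixed parameters; this is a routine expansion of the two polynomials $\binom{t+d}{t}$ and $\binom{t+d-\delta}{t}$ in $d$, noting the $d^t$ terms cancel because both have the same leading coefficient $1/t!$. Everything else is a direct application of Corollary~\ref{CorDepAlg} together with the minimality of $\delta$.
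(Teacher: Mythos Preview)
Your argument is correct and follows essentially the same route as the paper: fix $\delta$, observe that $\binom{t+d}{t}-\binom{t+d-\delta}{t}-1$ grows like a constant times $d^{t-1}$, use the hypothesis $\tau_d/(d^{t-1}(1+\eta_d))\to\infty$ to find a large $d$ for which the arithmetic inequality of Corollary~\ref{CorDepAlg} holds, and apply that corollary. The only cosmetic difference is that you frame it as a contradiction via a minimal $\delta$, whereas the paper simply lets $\delta$ be arbitrary and concludes that no dependence relation of any degree exists.
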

\begin{proof} 
Let $\delta$ be a positive integer. For sufficiently large $d$, we have
$$
\dbinom{t+d}{t} - \dbinom{t+d- \delta}{t} -1 < c d^{t-1} < \frac{\tau_d}{1 + \eta_d}
$$
where $c>0$ is a suitable constant.
Then the hypotheses of Corollary \ref{CorDepAlg} are satisfied with $\tau = \tau_d$ and $\eta = \eta_d$.
Hence $\vartheta_1,\ldots,\vartheta_t$ do not satisfy any algebraic dependence relation with rational coefficients of degree $\le\delta$. 
This is true for all $\delta$.
Therefore $\vartheta_1,\ldots,\vartheta_t$ are algebraically independent.
\end{proof}

\section{Transcendence criteria}

The special case $t=1$ of Corollary \ref{CorAlg} yields a criterion for transcendence, which is similar to a well known result due to A.O.~Gel'fond  (see for instance Lemma 6.3, \S~1.3, Chap.~6 of \cite{MR1603608}).  

\begin{cor}  \label{CorTcd1}
Let $\vartheta$ be a real number, $(d_n)_{n \ge 0}$  a sequence of positive integers, $(\alpha_n)_{n \ge 0},(\beta_n)_{n \ge 0}$ and $(\gamma_n)_{n \ge 0}$  sequences of positive real numbers such that $\lim_{n \rightarrow \infty}\gamma_n /d_n= \infty$ and, for all sufficiently large integers $n$, $\alpha_n + \beta_n - \gamma_{n-1} \le \alpha_{n+1} + \beta_{n+1} - \gamma_{n}$.
Assume that   there exists  a sequence $(P_n)_{n \ge 0}$ of polynomials in $\Z[X]$, where $P_n$ has  degree at most $d_n$ and length at most $e^{\beta_n}$, satisfying 
$$
e^{-\alpha_n} \le |P_n(\vartheta)| \le e^{-\gamma_n}.
$$
Then 
$$
\gamma_n \le \log{2} + d_n (\alpha_n + \beta_n - \gamma_{n-1} + \log{2})
$$
for all sufficiently large integers $n$.
\end{cor}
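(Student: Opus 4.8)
The plan is to deduce Corollary \ref{CorTcd1} as the special case $t=1$ of Corollary \ref{CorAlg}, which has already been proved in the excerpt. So the argument is essentially a bookkeeping exercise: specialize $t=1$ and simplify the binomial coefficient.

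First I would set $t=1$ in Corollary \ref{CorAlg}. The hypothesis $\lim_{n\to\infty} d_n^{-t}\gamma_n=\infty$ becomes $\lim_{n\to\infty}\gamma_n/d_n=\infty$, which is exactly the hypothesis of Corollary \ref{CorTcd1}. The non-decreasing condition $\alpha_n+\beta_n-\gamma_{n-1}\le\alpha_{n+1}+\beta_{n+1}-\gamma_n$ is identical in both statements. A polynomial in $\Z[X_1,\ldots,X_t]$ with $t=1$ is just a polynomial in $\Z[X]$, its total degree is its ordinary degree, and "length at most $e^{\beta_n}$" and the two-sided bound $e^{-\alpha_n}\le|P_n(\vartheta)|\le e^{-\gamma_n}$ carry over verbatim. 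Hence all hypotheses of Corollary \ref{CorAlg} hold with $t=1$.

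Next I would compute the binomial term. With $t=1$ we have $\binom{d_n+t}{t}=\binom{d_n+1}{1}=d_n+1$, so $\binom{d_n+t}{t}-1=d_n$. Substituting into the conclusion of Corollary \ref{CorAlg},
$$
\gamma_n\le\log 2+\left(\binom{d_n+1}{1}-1\right)(\alpha_n+\beta_n-\gamma_{n-1}+\log 2)=\log 2+d_n(\alpha_n+\beta_n-\gamma_{n-1}+\log 2)
$$
for all sufficiently large $n$, which is precisely the desired inequality.

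There is essentially no obstacle here; the only point requiring a word of care is the identification of the degree hypotheses (total degree versus degree in one variable) and confirming that $\binom{d_n+1}{1}=d_n+1$, so that the factor in front of $(\alpha_n+\beta_n-\gamma_{n-1}+\log 2)$ is exactly $d_n$. With that verified, the statement follows immediately from Corollary \ref{CorAlg}.
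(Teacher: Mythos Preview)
Your proposal is correct and matches the paper's own treatment: the paper does not give a separate proof of this corollary but simply states that it is the special case $t=1$ of Corollary~\ref{CorAlg}, and your verification that the hypotheses specialize correctly and that $\binom{d_n+1}{1}-1=d_n$ is exactly what is needed.
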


Here is the special case $t=1$ of Corollary \ref{CorAlg0}. 
\begin{cor}  \label{CorTcd2} 
Let $\vartheta$ be a real number, $d$  a positive integer,  $\tau_1,\tau_2$ positive real numbers and $\sigma(n)$  a non--decreasing positive  function such that  $\lim_{n \rightarrow \infty} \sigma(n) = \infty$.
Assume that there exists  a sequence $(P_n)_{n \ge 0}$ of polynomials  in $\Z[X]$, where $P_n$ has degree at most $d$ and length at most $e^{\sigma(n)}$, satisfying 
$$
e^{-(\tau_1 + \mathit{o}(1))\sigma(n)}\le  |P_n(\vartheta)|  \le e^{-(\tau_2  + \mathit{o}(1))\sigma(n+1)}.
$$
Then 
$$
\tau_2 \le d+d(\tau_1 - \tau_2).
$$
\end{cor}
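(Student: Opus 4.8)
The plan is to derive Corollary \ref{CorTcd2} as the one-variable specialization of Corollary \ref{CorAlg0}, since the latter is already proven and the binomial coefficient degenerates nicely when $t=1$. First I would observe that with $t=1$ and total degree bound $d$, the binomial coefficient appearing in Corollary \ref{CorAlg0} becomes
$$
\binom{d+t}{t} - 1 = \binom{d+1}{1} - 1 = (d+1) - 1 = d.
$$
Therefore Corollary \ref{CorAlg0}, applied with $t=1$, $\vartheta_1 = \vartheta$, and the same $d$, $\tau_1$, $\tau_2$, $\sigma(n)$, yields exactly
$$
\tau_2 \le d(1 + \tau_1 - \tau_2) = d + d(\tau_1 - \tau_2),
$$
which is the desired conclusion.

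Next I would check that the hypotheses match up. A polynomial in $\Z[X]$ of degree at most $d$ is precisely a polynomial in $\Z[X_1, \ldots, X_t]$ of total degree at most $d$ when $t = 1$, and the notion of length is unchanged. The hypothesis $\lim_{n \to \infty} \sigma(n) = \infty$ with $\sigma$ non-decreasing is identical, and the sandwich bound
$$
e^{-(\tau_1 + \mathit{o}(1))\sigma(n)} \le |P_n(\vartheta)| \le e^{-(\tau_2 + \mathit{o}(1))\sigma(n+1)}
$$
is the $t=1$ instance of the corresponding bound in Corollary \ref{CorAlg0}. So every hypothesis of Corollary \ref{CorAlg0} is satisfied, and the conclusion follows immediately.

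There is essentially no obstacle here: the only thing to verify is the arithmetic identity $\binom{d+1}{1} - 1 = d$ and the trivial observation that univariate polynomials are a special case of multivariate ones. The statement is flagged in the text as "the special case $t=1$ of Corollary \ref{CorAlg0}," so the proof is a one-line invocation. If one preferred a self-contained argument, one could instead specialize Corollary \ref{CorAlg} directly (taking $d_n = d$, $\alpha_n = \tau_1' \sigma(n)$, $\beta_n = \sigma(n)$, $\gamma_n = \tau_2' \sigma(n+1)$ with $\tau_1' > \tau_1$, $\tau_2' < \tau_2$, then letting these tend to $\tau_1$, $\tau_2$), but routing through Corollary \ref{CorAlg0} is cleaner since that reduction has already been carried out.
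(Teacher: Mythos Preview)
Your proposal is correct and matches the paper's approach exactly: the paper states explicitly that Corollary~\ref{CorTcd2} is the special case $t=1$ of Corollary~\ref{CorAlg0} and gives no further argument, so your verification of the binomial identity $\binom{d+1}{1}-1=d$ and the routine matching of hypotheses is precisely what is needed.
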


Note that if $\tau_1 = \tau_2$, then the conclusion is $\tau_1 \le d$. 

It is interesting to compare Corollary \ref{CorTcd2} with the results following from the proof of Gel'fond's criterion: in our present paper,  we use only  linear elimination, while Gel'fond's proof relies on algebraic elimination. In Gel'fond's criterion, there is no need of a lower bound for $|P_n(\vartheta)|$, but the conclusion is not so sharp. For instance 
 Lemma 14 \cite{MR1475689} implies the following result:
\begin{prop}[\cite{MR1475689}]  \label{PropLaurentRoy}
Let $\vartheta$ be a complex number, $d$  a positive integer, $\alpha$ and $\beta$  positive real numbers.
Assume that  there exists  a sequence $(P_n)_{n \ge 0}$ of polynomials in $\Z[X]$, where $P_n$ has degree at most $d$ and length at most $e^{\beta n + \mathit{o}(n)}$, satisfying 
$0 <  |P_n(\vartheta)| \le e^{-\alpha n + \mathit{o}(n)}$.
Then $\alpha \le 3 d \beta$.
\end{prop}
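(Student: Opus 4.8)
The plan is to use \emph{algebraic} elimination (Gel'fond's method, via resultants) rather than the linear elimination of the rest of the paper; one could also simply quote Lemma~14 of \cite{MR1475689} after matching parameters, but I would sketch a self-contained route, by induction on $d$. For $d=1$: if two primitive linear forms $P_n,P_{n+1}$ are not proportional, then $\mathrm{Res}(P_n,P_{n+1})=\mathrm{lc}(P_n)P_{n+1}(\vartheta)-\mathrm{lc}(P_{n+1})P_n(\vartheta)$ is a non-zero integer, so $1\le 2e^{(\beta-\alpha)n+o(n)}$ and $\alpha\le\beta\le 3d\beta$; moreover $P_n,P_{n+1}$ cannot be proportional for all large $n$, since $P_n(\vartheta)$ would then be a fixed non-zero number tending to $0$. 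For the inductive step, assume the statement in degree $\le d-1$. One may take each $P_n$ primitive (dividing by the content increases neither the length nor $|P_n(\vartheta)|$), and note $L(P_{n+1})\le e^{\beta n+o(n)}$ and $|P_{n+1}(\vartheta)|\le e^{-\alpha n+o(n)}$, so consecutive terms carry the same growth.

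\emph{Coprime case.} If $\gcd(P_n,P_{n+1})=1$ for infinitely many $n$, then for those $n$ the integer $R_n:=\mathrm{Res}(P_n,P_{n+1})$ is non-zero, so $|R_n|\ge 1$. Writing $R_n=U(X)P_n(X)+V(X)P_{n+1}(X)$ with $U,V\in\Z[X]$ of degrees below $\deg P_{n+1}$ and $\deg P_n$, whose coefficients are (signed) subdeterminants of the Sylvester matrix and so are bounded by $(2d)!\,\max(L(P_n),L(P_{n+1}))^{2d-1}$ (Cramer's rule), evaluation at $X=\vartheta$ yields $1\le|R_n|\le c(d,\vartheta)\,e^{(2d\beta-\alpha)n+o(n)}$; letting $n\to\infty$ forces $\alpha\le 2d\beta\le 3d\beta$.

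\emph{Common-factor case.} Otherwise $\gcd(P_n,P_{n+1})$ is non-constant for every large $n$. The primitive greatest common divisors $G_n$ of $\{P_m:m\ge n\}$ form an increasing divisibility chain of degree $\le d$, hence stabilise to a polynomial $G$. If $G$ is non-constant, then $G\mid P_n$ for all large $n$ and $G(\vartheta)\ne 0$ (else $P_n(\vartheta)=0$); writing $P_n=G E_n$ with $E_n\in\Z[X]$ (Gauss's lemma) and using Mahler's bound $L(E_n)\le 2^{d}L(P_n)$, one gets a sequence of polynomials of degree $\le d-1$, length $\le e^{\beta n+o(n)}$, with $0<|E_n(\vartheta)|=|P_n(\vartheta)|/|G(\vartheta)|\le e^{-\alpha n+o(n)}$, and the induction hypothesis gives $\alpha\le 3(d-1)\beta<3d\beta$. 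The residual case---$G=1$ while no two consecutive $P_n$ are coprime---is the delicate one: boundedly many polynomials of degree $\le d$ may be pairwise non-coprime without sharing a factor, so the degree cannot be dropped at once; one instead iterates, at each step either producing a coprime pair $(P_n,P_m)$ with $m-n$ bounded (to which the coprime-case estimate applies verbatim, a bounded shift being absorbed into $o(n)$) or splitting off a genuine common factor and lowering the degree. I expect this final bookkeeping to be the main obstacle: organising the reductions so the accumulated loss stays within $3d\beta$. A naive organisation already gives $\alpha=O(d^{2}\beta)$; the sharp value $3d$ is exactly what Lemma~14 of \cite{MR1475689} extracts, and its proof is the one I would follow.
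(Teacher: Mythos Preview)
The paper does not prove this proposition at all: it is stated with the attribution \cite{MR1475689} and introduced by the sentence ``Lemma~14 \cite{MR1475689} implies the following result,'' and is used only as a foil to highlight that the paper's own linear-elimination method (Corollary~\ref{CorTcd2}) yields the sharper bound $\alpha\le d\beta$ once a matching lower bound on $|P_n(\vartheta)|$ is assumed. There is therefore no proof in the paper to compare against beyond the bare citation.

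Your sketch is in the correct spirit---algebraic elimination via resultants, i.e.\ Gel'fond's method, which is exactly what the paper contrasts with its own approach---and the base case $d=1$ and the coprime-resultant step are essentially right (the exponent you get is indeed at most $2d\beta$, already below $3d\beta$). But, as you yourself concede, the residual case where every consecutive pair $P_n,P_{n+1}$ shares a nontrivial factor while no single factor divides all of them is the genuine difficulty, and you do not resolve it: you observe that a naive argument gives only $O(d^2\beta)$ and then defer to Lemma~14 of \cite{MR1475689} for the sharp constant $3d$. Since that is precisely what the paper does as well, your proposal and the paper's treatment ultimately coincide: both amount to citing Laurent--Roy. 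If you want a self-contained proof, the missing ingredient is the careful bookkeeping of that lemma (or an equivalent semi-resultant/subresultant argument) that controls the degree drops and index shifts simultaneously.
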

Under the assumptions of Proposition \ref{PropLaurentRoy}, if we assume that $\vartheta$ is a real number and
$$
 |P_n(\vartheta)| = e^{-\alpha n + \mathit{o}(n)},
$$ 
then Corollary \ref{CorTcd2} gives the stronger conclusion $\alpha \le  d \beta$.

We conclude with the special case $t=\kappa=1$ of  Corollary \ref{CorAlg1},  which is also the special case where $\delta_n$, $\alpha_n$, $\beta_n$ and $\gamma_n$ satisfy
 $$
 \delta_n = \delta n + \mathit{o}(n), \;  \alpha_n = \lambda n^2 + \alpha n + \mathit{o}(n), \;  \beta_n = \beta n + \mathit{o}(n) \; \hbox{ and } \;  \gamma_n =\lambda n^2 +\gamma n + \mathit{o}(n)
 $$
 of Corollary \ref{CorTcd1}. 
\smallskip
\smallskip
\smallskip
\begin{cor} \label{CorTcd3}
Let $\vartheta$ be a real number, $\beta$, $\delta$ and $\lambda$  positive real numbers  such that $\delta < 1/2$, $\alpha$ and $\gamma$  real numbers.
Assume that  there exists  a sequence $(P_n)_{n \ge 0}$ of polynomials in $\Z[X]$, where $P_n$ has degree at most $\delta n+ \mathit{o}(n)$ and length at most $e^{\beta n + \mathit{o}(n)}$, satisfying 
$$
e^{- (\lambda n^2 + \alpha n + \mathit{o}(n))} \le |P_n(\vartheta)| \le e^{- (\lambda n^2 + \gamma n + \mathit{o}(n))}.
$$
Then $\lambda(1-2\delta)  \le \delta( \alpha + \beta -\gamma)$.
\end{cor}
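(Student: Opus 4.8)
The plan is to obtain Corollary~\ref{CorTcd3} as the special case $t=\kappa=1$ of Corollary~\ref{CorAlg1}. With $t=1$ the requirement $\kappa\le 1/t$ reads $\kappa\le 1$ and is satisfied with equality by $\kappa=1$; the conclusion of Corollary~\ref{CorAlg1} in the boundary regime $\kappa=1/t$ is $\lambda\bigl(1-2\delta^{t}/t!\bigr)\le \delta^{t}(\alpha+\beta-\gamma)/t!$, which for $t=1$ is precisely $\lambda(1-2\delta)\le \delta(\alpha+\beta-\gamma)$. The additional hypothesis $\delta<1/2$ plays no role in the argument; it only makes the conclusion non-vacuous, since for $\delta\ge 1/2$ the left-hand side is $\le 0$ while $\delta(\alpha+\beta-\gamma)\ge \delta\beta>0$ (the two-sided bound on $|P_n(\vartheta)|$ forces $\alpha\ge\gamma$).

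Since the excerpt also presents this as the special case of Corollary~\ref{CorTcd1} with $d_n=\delta n+\mathit{o}(n)$, $\alpha_n=\lambda n^{2}+\alpha n+\mathit{o}(n)$, $\beta_n=\beta n+\mathit{o}(n)$, $\gamma_n=\lambda n^{2}+\gamma n+\mathit{o}(n)$, an alternative self-contained derivation runs as follows. Fix $\varepsilon>0$ with $\delta+\varepsilon<1/2$ and set $\alpha'=\alpha+\varepsilon$, $\beta'=\beta+\varepsilon$, $\gamma'=\gamma-\varepsilon$, $\delta'=\delta+\varepsilon$; then take $d_n=\lceil\delta' n\rceil$, $\alpha_n=\lambda n^{2}+\alpha' n$, $\beta_n=\beta' n$, $\gamma_n=\lambda n^{2}+\gamma' n$. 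For $n$ large $d_n$ is a positive integer, $\alpha_n,\beta_n,\gamma_n$ are positive reals, $P_n$ has degree $\le d_n$ and length $\le e^{\beta_n}$, and the hypothesis yields $e^{-\alpha_n}\le|P_n(\vartheta)|\le e^{-\gamma_n}$. One has $\gamma_n/d_n\to\infty$, and
\[
\alpha_n+\beta_n-\gamma_{n-1}=(2\lambda+\alpha'+\beta'-\gamma')\,n-\lambda+\gamma',
\]
an affine function of $n$ whose slope $2\lambda+(\alpha-\gamma)+\beta+3\varepsilon$ is positive because $\alpha\ge\gamma$ and $\beta>0$; hence $(\alpha_n+\beta_n-\gamma_{n-1})_n$ is non-decreasing. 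Corollary~\ref{CorTcd1} now gives, for $n$ large,
\[
\lambda n^{2}+\gamma' n\le \log 2+(\delta' n+O(1))\bigl((2\lambda+\alpha'+\beta'-\gamma')\,n+O(1)\bigr).
\]
Dividing by $n^{2}$ and letting $n\to\infty$ yields $\lambda\le \delta'(2\lambda+\alpha'+\beta'-\gamma')$, i.e. $\lambda(1-2\delta')\le \delta'(\alpha'+\beta'-\gamma')$; letting $\varepsilon\to 0$ gives the stated inequality.

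I do not expect a real obstacle: both routes are mechanical specializations of results already in hand. The only point that needs a moment's attention is the monotonicity hypothesis of Corollary~\ref{CorTcd1} (equivalently of Corollary~\ref{CorAlg}), namely $\alpha_n+\beta_n-\gamma_{n-1}\le\alpha_{n+1}+\beta_{n+1}-\gamma_n$: verifying it amounts to observing that the two-sided estimate for $|P_n(\vartheta)|$ already forces $\alpha\ge\gamma$, so that the relevant affine function of $n$ has non-negative slope. The remainder is routine bookkeeping with the $\mathit{o}(\cdot)$ terms and a limit in $\varepsilon$.
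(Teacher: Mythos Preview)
Your proposal is correct and follows exactly the approach indicated in the paper: the result is stated there precisely as the special case $t=\kappa=1$ of Corollary~\ref{CorAlg1} (equivalently, as the indicated specialization of Corollary~\ref{CorTcd1}), with no separate proof given. Your explicit verification via Corollary~\ref{CorTcd1}, including the computation $\alpha_n+\beta_n-\gamma_{n-1}=(2\lambda+\alpha'+\beta'-\gamma')n-\lambda+\gamma'$ and the observation that $\alpha\ge\gamma$ ensures the required monotonicity, reproduces the calculation carried out in the proof of Corollary~\ref{CorAlg1} for the case $\kappa=1/t$.
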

 

\bigskip

\noindent{\bf Acknowledgements:\ } The author thanks Michel Waldschmidt for his support and directions, also Bernard de Mathan, Francesco Amoroso St\'{e}phane Fischler and W. Dale Brownawell for their helpful remarks.


\vspace{0.6cm}

\begin{small}
\noindent
Amarisa Chantanasiri  \\
Institut de Math\'{e}matiques de Jussieu, \\
Th\'{e}orie des Nombres 7C20, \\
175 rue du Chevaleret 75013 Paris, France \\
Email: \texttt{chantanasiri@math.jussieu.fr}
\end{small}

\label{last-page}

\end{document}